\newcommand{\bilddir}{.}
\newcommand{\R}{\mathbb{R}}
\newcommand{\Z}{\mathbb{Z}}
\newcommand{\bigoh}{\mathcal{O}}
\newcommand{\sinc}{\mbox{sinc}}
\newcommand{\tibeta}{\tilde{\beta}}
\newcommand{\dx}{\frac{dx}{2\pi}}
\newcommand{\dxp}{\frac{dx'}{2\pi}}
\newcommand{\dy}{\frac{dy}{2\pi}}
\newcommand{\dz}{\frac{dz}{2\pi}}
\newtheorem{theorem}{Theorem}
\newtheorem{hypothesis}[theorem]{Hypothesis}
\newtheorem{proposition}[theorem]{Proposition}
\newtheorem{lemma}[theorem]{Lemma}
\newtheorem{remark}[theorem]{Remark}
\title{A Boltzmann model for rod alignment and schooling fish}
\author{Eric Carlen$^{(1)}$, Maria C. Carvalho$^{(2)}$, Pierre Degond$^{(3)}$ and Bernt Wennberg$^{(4,5)}$}
\date{}
\begin{document}

\maketitle

\begin{center}
(1) Department of Mathematics, 
Rutgers University \\
110 Frelinghuysen Rd., Piscataway NJ 08854-8019, USA\\
email: carlen@math.rutgers.edu \\

\vspace{0.2cm} 

(2) Department of Mathematics and CMAF, University of Lisbon,\\
Av. Prof. Gama Pinto 2, 1649-003 Lisbon, Portugal \\
email: mcarvalh@cii.fc.ul.pt \\

\vspace{0.2cm} 

(3) Department of Mathematics, Imperial College London, \\
London SW7 2AZ, United Kingdom \\
email:  pdegond@imperial.ac.uk\\

\vspace{0.2cm} 

(4) Department of Mathematical Sciences, \\
Chalmers University of Technology, SE41296 G\"oteborg, Sweden \\

\vspace{0.2cm} 

(5) Department of Mathematical Sciences, \\
University of Gothenburg, SE41296 G\"oteborg, Sweden\\
email: wennberg@chalmers.se

\end{center}

\begin{abstract}
We consider a Boltzmann model introduced by Bertin, Droz and Gr\'egoire as a binary interaction model of the Vicsek alignment interaction. This model considers particles lying on the circle. Pairs of particles interact by trying to reach their mid-point (on the circle) up to some noise. We study the equilibria of this Boltzmann model and we rigorously show the existence of a pitchfork bifurcation when a parameter measuring the inverse of the noise intensity crosses a critical threshold. The analysis is carried over rigorously when there are only finitely many non-zero Fourier modes of the noise distribution. In this case, we can show that the critical exponent of the bifurcation is exactly $1/2$.  In the case of an infinite number of non-zero Fourier modes, a similar behavior can be formally obtained thanks to a method relying on integer partitions first proposed by Ben-Na\"im and Krapivsky.
\end{abstract}

\medskip
\noindent
{\bf Keywords:} kinetic equation; binary interaction; mid-point rule; equilibria; pitchfork bifurcation; integer partition; swarms.

\medskip
\noindent
{\bf AMS Subject Classification:} 35Q20, 35Q70, 35Q82, 35Q92, 60J75, 60K35, 82C21, 82C22, 82C31, 92D50

\medskip
\noindent
{\bf Acknowledgements} EC acknowledges partial support by U.S. National Science Foundation grant DMS 1201354.  PD is on leave from CNRS, Institut de Math\'ematiques de Toulouse, France, where this research has been partly conducted. PD acknowledges support from the Royal Society and the Wolfson foundation through a Royal Society Wolfson Research Merit Award, from the French 'Agence Nationale pour la Recherche (ANR)'  in the frame of the contract 'MOTIMO' (ANR-11-MONU-009-01) and from NSF kinetic research network Grant DMS11-07444 (KI-net). MCC was partially supported by FCT Project PTDC/MAT/100983/2008. BW was partially supported by the Swedish research council and by the Knut and Alice Wallenberg foundation. EC, MCC \& BW  wish to acknowledge the hospitality of the Institut de Math\'ematiques, Toulouse and EC \& MCC wish to acknowledge the hospitality of the Faculty of Sciences, University of Gothenburg, where this research was partly done.

\section{Introduction}
\label{sec_intro}

This paper is concerned with the study of some interaction mechanisms between large collections of agents subject to social interaction. Specifically, we consider a Boltzmann model introduced in ~\cite{BertinDrozGregoire2006} as 
a binary interaction counterpart of the Vicsek alignment interaction~\cite{Vicsek_etal1995}. The goal of the present work is to study the equilibria of this Boltzmann model and to rigorously show that this model exhibits pitchfork bifurcations (or second order phase transitions). 

Systems of self-propelled particles interacting through local alignment have triggered considerable literature since the seminal work of Vicsek and co-authors \cite{Vicsek_etal1995}. Indeed, this simple model exhibits all the universal features of collective systems observed in nature and in particular, the emergence of symmetry-breaking phase transitions from disorder to globally aligned phases.  We refer for instance to \cite{Aldana_etal_PRL07, Chate_etal_PRE08, Degond_etal_JNonlinearSci13, Degond_etal_preprint13, Frouvelle_Liu_SIMA12, Gretoire_Chate_PRL04} for the study of these phase transitions. A recent review on this ever-growing literature can be found in \cite{Vicsek_Zafeiris_PhysRep12}. The overwhelming majority of references rely on Individual-Based Models (IBM)  or particle models \cite{Baskaran_Marchetti_PRL10, Carrillo_etal_SIMA10, Chate_etal_PRE08, Chuang_etal_PhysicaD07, Cucker_Smale_IEEETransAutCont07, Czirok_etal_PRE96, Ha_Liu_CMS09, Mogilner_etal_JMB03, Motsch_Tadmor_JSP11, Peruani_etal_PRE06}, mostly with applications to animal collective behavior from bacterias to mammals \cite{Aoki_BullJapSocSciFish92, Couzin_etal_JTB02, Gautrais_etal_PlosCB12}. When the number of agents becomes very large, kinetic models \cite{Bellomo_Soler_M3AS12, Bertin_etal_JPhysA09, Bolley_etal_M3AS11, Fornasier_etal_PhysicaD11, Ha_Tadmor_KRM08} or hydrodynamic models \cite{Barbaro_Degond_DCDSB13, Baskaran_Marchetti_PRE08, Bertin_etal_JPhysA09, Degond_Motsch_M3AS08, Degond_etal_MAA13, Degond_etal_Schwartz13, Frouvelle_M3AS12, Ratushnaya_etal_PhysicaA07, Toner_Tu_PRL95, Toner_etal_AnnPhys05} are more efficient and have received an increasing attention in the literature. 

The present work is concerned with a kinetic, Boltzmann-like model which has been proposed as a kinetic version of the Vicsek particle model  in \cite{Bertin_etal_NewJPhys13, BertinDrozGregoire2006, Bertin_etal_JPhysA09}. This model shows strong similarity with a model proposed by Ben-Na\"im and Krapivsky in \cite{BenNaimKrapivsky2006}. A zero-noise version of this model has been studied in \cite{Degond_etal_arXiv:1403.5233} ; it is shown that generically, Dirac deltas are the stable equilibria of this model. Here, we study the noisy version of this model and show that peaked equilibria (i.e. noisy versions of the Dirac deltas) emerge when the noise intensity becomes smaller than a critical value, and that, at the same time, uniform equilibria become unstable. Our rigorous proof is limited to the case where the noise has a finite number of Fourier coefficients, leaving the case of generic noises open. However, some formal results can be found by adapting the method of integer partitions by Ben-Na\"im and Krapivsky \cite{BenNaimKrapivsky2006}.

The main concern of this paper is the following 
Boltzmann equation:
\begin{eqnarray}
\label{eq:fish}
  \partial_t f(t,x_1) 
&=&\int_{-\pi}^{\pi}\int_{-\pi}^{\pi}
f(t,x'_1)f(t,x'_2) 
g(x_1-\hat x'_{12}) \, \beta(|\sin(x'_2-\hat x'_{12})|) \, \frac{dx'_1}{2 \pi} \, \frac{dx'_2}{2 \pi}
 \nonumber \\
&&\qquad\qquad\qquad\qquad - f(t,x_1) \int_{-\pi}^{\pi} f(t,x_2) \,
\beta(|\sin(x_2-\hat x_{12})|) \,\frac{dx_2}{2 \pi}  
. 
\end{eqnarray}
Here, $\hat x_{12} = \mbox{Arg} \{ \frac{e^{i x_1} +e^{ix_2}}{|e^{i x_1} +e^{ix_2}|} \}$ is the argument (modulo $2 \pi$) of the midpoint on the
smallest arc on the unit circle between $e^{i x_1}$ and $e^{i x_2}$, $\hat x'_{12} =
\mbox{Arg} \{ \frac{e^{i x'_1} +e^{ix'_2}}{|e^{i x'_1} +e^{ix'_2}|} \}$. The quantity $2|\sin(x_2-\hat
x_{12})|$ is the euclidean distance in ${\mathbb R}^2$ between
$x_1$ and $x_2$. As usual in kinetic theory, the collision rate
between two particles is a function $\beta$ of this distance. The
unknown $f$ is a probability density on the circle ${\mathbb S}^1
\approx {\mathbb R}/(2 \pi {\mathbb Z})$, giving {\em e.g.} 
the distribution of directions in a fish school, and $g$ is a given
probability density modeling the noise in the model. The first term at
the right-hand side (the gain term) expresses the rate at which
particles acquire the velocity $x_1$ as a result of collisions of two
particles of velocities $x_1'$ and $x_2'$. The post-collision velocity
$x_1$ of particle $1$ is distributed around the ``mid-point''  (in the
sense above) $\hat x'_{12}$ of the two pre-collisional velocities
$x_1'$ and $x_2'$ according to the probability distribution $g$. The
loss term (the second term) is found in a similar way reversing the
roles of the pre- and post-collisional velocities. In our case 
$\beta$ is just a constant (to mimic ``Maxwellian molecules'' in gas
dynamics)  or if one takes a collision rate proportional to the
relative velocities of the particles as usual in kinetic theory,
$\beta(x)$ is proportional to $x$. A space-dependent version of this
equation was first formulated by  E. Bertin, M. Droz and G. Grégoire
in~\cite{BertinDrozGregoire2006} as 
a model for swarm dynamics inspired by the so-called Vicsek model~\cite{Vicsek_etal1995} (see also e.g. \cite{Bertin_etal_NewJPhys13, Bertin_etal_JPhysA09}).  

A rigorous derivation of equation~(\ref{eq:fish}) as a limit as
$N\rightarrow\infty$ of an $N$-particle system was carried out 
in~\cite{CarlenChatelinDegondWennberg2011, CarlenDegondWennberg2011},
where a general {\em propagation of chaos} result is obtained for  {\em pair
  interaction driven $N$-particle systems}. These are defined as
Markov jump 
processes in an $N$-fold product space ${\mathbb T}^N = ({\mathbb
  S}^1)^N$, where jumps almost surely only involve two
coordinates. The jumps are triggered by a Poisson clock with  rate
proportional to $N$, and the outcome of a
jump is independent of the clock. A jump involves first a choice of a
pair $(j,k)$ from the set $1\le j<k \le N$, and then a transition
$x\mapsto x'$, independent of $(j,k)$: 
$$  x=(x_1,....,x_j,...,x_k,....,x_N) \mapsto
  (x_1,....,x_j',...,x_k',....,x_N)=x'\,. $$
The jump process behind equation~(\ref{eq:fish}) is defined in the
$N$-dimensional torus, represented by coordinates $x_j\in [-\pi\pi[$
. The jumps take a pair $(x_j,x_k)$ to  
$$  (x_j',x_k') = (\hat{x}_{jk} +
  X_j,\hat{x}_{jk} + X_k) \qquad \mbox{mod}\qquad
  2\pi\times 2\pi\,, $$
where $X_j$ and $X_k$ are independent and equally distributed angles (see Figure~\ref{fig:001}). 
Of course this is not well defined on the set  $x_j  = -x_k$, but that is a set
of measure zero, and at least if the distribution of $x_j$ has a density, this case may be neglected.
\begin{figure}
  \centering
  \begin{tikzpicture}
    \draw[blue] (0,0) circle (2.0);
    \draw[black,->](-2.4,0)--(2.4,0);
    \draw[black,->](0,-2.4)--(0,2.4);
    \draw[green, fill] (-1.,1.73205)  circle (0.1)  node[anchor=south east] {$x_j$};  
    \draw[green, fill] (1.87939,0.68404)  circle (0.1) node[anchor=north west] {$x_k$}; 

    \draw[red, fill] (0.68404,1.87939) circle (0.1) node[above] {$\hat{x}_{jk}$};

    \draw[red](-0.174311,1.99239)  circle (0.1) node[anchor=south east] {$x_j'$};
    \draw[red] (1.17557,1.61803) circle (0.1) node[anchor=south west] {$x_k'$};
     \end{tikzpicture}
  \caption{The jump process in the BDG model}
  \label{fig:001}
\end{figure}
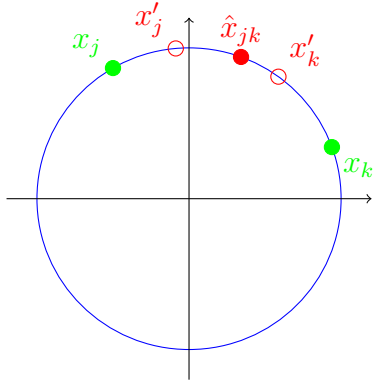

An interesting feature of this process is that, although
propagation of chaos holds, as required for the derivation of
equation~(\ref{eq:fish}), this equation has strongly peaked solutions,
which implies certain dependence between two particles distributed
according to the density $f$. We will expand on this statement below,
where the formal calculations in going from an $N$-particle system to
the kinetic equation are repeated.

The main new results in this paper concern
equation~(\ref{eq:fish}). First, it is easy to see that the uniform 
density, $f(x)=1/{2 \pi}$ is a stationary equilibrium, and that the
(linearized) stability of this equilibrium depends on the first moment
$\gamma_1$ of the noise distribution $g$. The moment $\gamma_1$
indicates how peaked $g$ is (the larger $\gamma_1$, the more strongly
peaked $g$ is).  Second, in the Maxwellian case, we explicitly
construct non-uniform stationary solutions when the noise distribution
$g$ has a finite number of non-zero Fourier coefficients. We prove the
existence of a pitchfork bifurcation (or second-order  phase
transition) when $\gamma_1$ crosses a critical value $\gamma_c =
\pi/4$. For $\gamma_1 \leq \gamma_c$, the uniform stationary
distributions is stable. For $\gamma_1 > \gamma_c$ and close to it,
there exists another class of equilibria which are stable while the
uniform stationary distribution becomes unstable. Additionally, we can
prove that the associated critical exponent is $1/2$ when considering
the first moment of the stationary solution as an order parameter.

An equation very similar to~(\ref{eq:fish}) is studied by Ben-Naim and
Krapivsky in~\cite{BenNaimKrapivsky2006} as a model for rod alignment: 
\begin{eqnarray}
  \label{eq:bennaim}
  \frac{\partial}{\partial t}f(x,v) &=& D \frac{\partial^2}{\partial x^2}f(x,t)
  + \int_{-\pi}^{\pi} f(x+y/2,t) f(x-y/2,t)\,\frac{dy}{2\pi} - f(x,t)\,.  
\end{eqnarray}
While in equation~(\ref{eq:fish}) all particles remain fixed between
the pair interactions, the model of Ben-Naim and Krapivsky assumes
that each particle follows a Brownian motion between the jumps. On the
other hand, contrary to equation~(\ref{eq:fish}), the jumps in
equation~(\ref{eq:bennaim}) imply perfect alignment. More
considerations about this model will be found in
Section~\ref{sec:fourier}, and in particular in
  Section~\ref{sec:BenNaimKrapivsky}, where  the analysis
  in~\cite{BenNaimKrapivsky2006} is studied in more detail.  Their
  analysis also uses the Fourier series
  expansion of the stationary solution, and  semi explicit
  expressions for the Fourier coefficients are obtained by expanding
  these coefficients as a power series of the first coefficient,
  $a_1$. We adapt their method to our case, and at the same time we
  try to clarify some technical points of the method. The result is
  formal in the sense that we do not prove convergence of any of the
  series appearing in the work, but it does provide new insights in
  the behavior of the model.

The layout of the paper is as follows. In Section \ref{sec:real}, we review the simple case where the model is posed on the real line (instead of the circle). In this case, an explicit formula for the equilibria can be found in Fourier-transformed variables. Going back to the model posed on the circle in Section \ref{sec:fourier}, we show that the Fourier coefficients of the distribution function satisfy a fully-coupled nonlinear dynamical system. The linearization of this system about an isotropic equilibrium is studied in Section \ref{sec:linearized}. We show that the isotropic equilibrium is unstable for noise intensities below a certain threshold and that the instability only appears in the first Fourier coefficient, suggesting that the first Fourier mode acts as an order parameter for this symmetry-breaking phase transition. In Section \ref{sec:explicit}, we rigorously prove the emergence of the phase transition and determine the critical exponent in the case where the noise probability has only finitely many non-zero Fourier modes. Indeed, in such a circumstance, any equilibrium solution has also finitely many non-zero Fourier coefficients, and finding such an equilibium can be rigorously accomplished using the Implicit Function Theorem.  We also show that the critical exponent of the phase transition is equal to $1/2$. It is interesting to contrast this result with that of \cite{Degond_etal_preprint13} where all critical exponents between $1/4$ and $1$ were found for the Vicsek dynamics. Removing the assumption of finitely many modes, only formal calculations can be performed at present. The work of Ben-Na\"im and Krapivsky  \cite{BenNaimKrapivsky2006} suggests that the critical exponent $1/2$ persists. In Section \ref{sec:BenNaimKrapivsky}, we relate their integer partition method to our approach. Finally, conclusions and perspectives are drawn in Section \ref{sec:conclu}.

\section{The model on the real line}
\label{sec:real}

\bigskip

In order to get a preliminary sense of the behavior of the model, it is
useful to investigate the more simple case where $x \in {\mathbb
  R}$. In this case, the Boltzmann equation is given by: 
\begin{eqnarray*}
  \partial_t f(t,x_1) 
&=&\int_{-\infty}^{\infty}\int_{-\infty}^{\infty}
f(t,x'_1)f(t,x'_2) 
g(x_1-\hat x'_{12}) \, \beta(|x'_2-\hat x'_{12}|) \, dx'_1 \,  dx'_2
\\
&&\qquad\qquad\qquad\qquad - f(t,x_1) \int_{-\infty}^{\infty} f(t,x_2)
\, \beta(|x_2-\hat x_{12}|) \,dx_2  \, . 
\end{eqnarray*}
where now, $\hat x_{12} = {(x_1+x_2)}/{2}$ and $x_2-\hat x_{12} = {(x_2-x_1)}/{2}$. This corresponds to pair interactions given by
\begin{eqnarray}
  \label{eq:1}
  (x_j,x_k) &\mapsto& \left(\frac{x_j+x_k}{2} +X_1,
    \frac{x_j+x_k}{2} +X_2     \right)\, 
\end{eqnarray}
where $X_1$ and $X_2$ are two independent, identically distributed random variables. The process is then similar to models considered in models of trade~\cite{Cordier_etal_JSP05} and is interesting in the present context mostly because it permits rather explicit calculations. A very similar model was also obtained~\cite{BenNaimKrapivsky2006} as a limit of nearly aligned rods. 

By a simple change of variables $x_2' = x_1' + y$, and using the fact
that we look for $f$ being a probability distribution, the Boltzmann
equation in the Maxwellian case simplifies to:  
\begin{eqnarray*}
  \partial_t f(t,x) 
&=&\int_{-\infty}^{+\infty}\int_{-\infty}^{+\infty}
f(t,x')f(t,x'+y) 
g(x-x'-\frac{y}{2})  \, dx dy - f(t,x) \,.
\end{eqnarray*}
We note that this can be written equivalently as 
$$  \partial_t f = (2 (f\ast f) ( 2 \cdot))\ast g - f\,.$$
Therefore, equilibria are solutions of the fixed-point equation: 
\begin{eqnarray}
\label{eq:line_3}
  f = (2 (f\ast f) ( 2 \cdot))\ast g\,,
\end{eqnarray}
which expresses that the distribution of $\frac{x_1+x_2}{2} + X$ when $x_1$ and $x_2$ are i.i.d. with
density $f$ and $X $ is a random variable of density $g$ must be equal to $f$ itself. 

\begin{theorem}
We suppose that $g \in {\mathcal P}_2 \cap L^1({\mathbb R}) \cap
C^0({\mathbb R}) $ where ${\mathcal P}_2$ is the space of probability
measures of~${\mathbb R}$ with bounded second moments. Additionally,
we suppose that $g$ has zero mean. The solutions in ${\mathcal P}_2
\cap L^1({\mathbb R})$ of (\ref{eq:line_3}) are given by translations by
an arbitrary real number of a probability $f \in  {\mathcal P}_2 \cap
L^1({\mathbb R})$ whose Fourier transform $\hat f(\xi)$ has
the expression: 
$$\hat f(\xi) = \prod_{j=0}^{\infty} \hat{g}(\xi/2^j)^{2^j} .$$
\label{thm:line}
\end{theorem}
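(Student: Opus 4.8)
The plan is to pass to Fourier variables, where the quadratic fixed-point equation (\ref{eq:line_3}) becomes an algebraic recursion that can be unfolded explicitly. With the convention $\hat h(\xi)=\int_{\mathbb R} h(x)\,e^{-i\xi x}\,dx$ one has $\widehat{f\ast f}=\hat f^{\,2}$ and $\widehat{2(f\ast f)(2\,\cdot)}(\xi)=\hat f(\xi/2)^2$, so multiplying by $\hat g$ for the remaining convolution turns (\ref{eq:line_3}) into
\[
\hat f(\xi)=\hat g(\xi)\,\hat f(\xi/2)^2 ,
\]
which is just the statement that the characteristic function of $\tfrac{x_1+x_2}{2}+X$ (with $x_1,x_2$ i.i.d.\ of density $f$ and $X$ of density $g$, all independent) equals $\hat f$. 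Iterating this identity $n$ times I would record the finite-product formula
\[
\hat f(\xi)=\Big(\prod_{j=0}^{n-1}\hat g(\xi/2^j)^{2^j}\Big)\,\hat f(\xi/2^n)^{2^n},\qquad n\ge 1,
\]
after which the whole argument reduces to controlling the two factors as $n\to\infty$.

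For the tail factor I would use that $f\in{\mathcal P}_2$ forces $\hat f\in C^2$ with $\hat f(0)=1$, $\hat f'(0)=-im$ (where $m=\int x\,f\,dx$) and $|\hat f''|\le\int x^2\,f\,dx$, so that $\hat f(\eta)=1-im\eta+R(\eta)$ with $|R(\eta)|\le C\eta^2$. For $n$ large, $\hat f(\xi/2^n)$ lies in a small neighbourhood of $1$, its principal logarithm is defined, and $2^n\operatorname{Log}\hat f(\xi/2^n)=-im\xi+O(\xi^2/2^n)\to -im\xi$; hence $\hat f(\xi/2^n)^{2^n}\to e^{-im\xi}$ pointwise. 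The same second-order estimate applied to $\hat g$ (which likewise has zero mean and a finite second moment) gives $2^j\operatorname{Log}\hat g(\xi/2^j)=O(2^{-j})$ for large $j$, so the infinite product $\prod_{j\ge 0}\hat g(\xi/2^j)^{2^j}$ converges locally uniformly. Letting $n\to\infty$ in the finite-product formula then yields $\hat f(\xi)=e^{-im\xi}\prod_{j=0}^{\infty}\hat g(\xi/2^j)^{2^j}$, i.e.\ $f$ is the translate by $m$ of the probability whose Fourier transform is the claimed product. (The possible zeros of $\hat g$ are harmless: where $\hat g(\xi)=0$ both sides vanish, and only the large-$j$ factors, which sit near $1$, enter the convergence of the product.)

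For the converse I would check that, for each $m\in\mathbb R$, the function $\xi\mapsto e^{-im\xi}\prod_{j\ge 0}\hat g(\xi/2^j)^{2^j}$ does solve $\hat f(\xi)=\hat g(\xi)\hat f(\xi/2)^2$ (a one-line substitution), and that it is the Fourier transform of an element of ${\mathcal P}_2\cap L^1$. The truncated products are characteristic functions of $\sum_{j=0}^{n-1}2^{-j}\sum_{k=1}^{2^j}Y_{j,k}$ with the $Y_{j,k}$ i.i.d.\ of density $g$, whose variances are bounded by $2\int x^2\,g\,dx$; by L\'evy's continuity theorem the limit is the characteristic function of some $\mu\in{\mathcal P}_2$, and since the $j=0$ factor splits off an independent summand with density $g\in L^1$, $\mu$ has density $g\ast(\text{law of the remaining terms})\in L^1$.

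The step I expect to demand the most care is the tail estimate $\hat f(\xi/2^n)^{2^n}\to e^{-im\xi}$: one has to be sure that the regularity of $\hat f$ at the origin supplied by $f\in{\mathcal P}_2$ is exactly what is needed to legitimise the logarithmic computation. For a fixed $\xi$ this is routine because $\xi/2^n\to 0$, but it is the precise place where the hypotheses $g\in{\mathcal P}_2$ and ``$g$ has zero mean'' are really used --- the zero-mean assumption being what forces the limiting exponential to be a pure translation rather than an unbounded drift.
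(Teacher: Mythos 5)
Your proof is correct, and it is in fact more complete than the paper's own argument for this theorem. Both proofs live in Fourier variables, use the second-moment/Taylor expansion of the characteristic function near the origin, and invoke L\'evy's continuity theorem. But the structure differs in an important way. The paper defines the truncated product $\hat g_n(\xi)=\prod_{j=0}^{n-1}\hat g(\xi/2^j)^{2^j}$, identifies $\hat g_n$ with the density $g_n$ obtained from the recursion $g_n=g*(2g_{n-1}(2\cdot))*(2g_{n-1}(2\cdot))$, and passes to the limit to produce \emph{one} solution $g_\infty$. This establishes existence, but it does not show that every element of $\mathcal P_2\cap L^1$ solving~(\ref{eq:line_3}) is a translate of $g_\infty$, which is what the theorem actually asserts. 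You instead start from an \emph{arbitrary} solution $f$, unfold the recursion to the finite-product identity $\hat f(\xi)=\bigl(\prod_{j=0}^{n-1}\hat g(\xi/2^j)^{2^j}\bigr)\hat f(\xi/2^n)^{2^n}$, and use the $C^2$ regularity of $\hat f$ at the origin (coming from $f\in\mathcal P_2$) to show the tail factor converges to $e^{-im\xi}$ with $m=\int x f\,dx$. This both recovers the product representation and isolates the translation factor, so it delivers the classification of all solutions, not just existence. Your converse step (that each $e^{-im\xi}\prod_j\hat g(\xi/2^j)^{2^j}$ is the Fourier transform of a $\mathcal P_2\cap L^1$ solution, via tightness of the partial-sum laws and the $j=0$ factor supplying the $L^1$ density) parallels the paper's existence argument and is handled correctly. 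In short: same toolkit, but your unfolding from a general fixed point closes the uniqueness-up-to-translation gap left open by the paper's construction.
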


\medskip
\noindent
{\bf Proof.}  We define
$$ \hat g_n (\xi) = \prod_{j=0}^{n-1} \, \hat{g}(\xi/2^j)^{2^j}. $$
We note that $\hat g_n$ is the Fourier transform of $g_n$ which
satisfies the recursion for $n \geq 1$: 
\begin{eqnarray}
\label{eq:7_1}
 g_n = g * (2 g_{n-1} (2 \cdot)) * (2 g_{n-1} (2 \cdot)).
\end{eqnarray}
and $g_0 = g$. Now, by recursion, $g_n$ is a probability
density. Indeed, supposing that $g_{n-1}$ is a probability density, we
obtain $g_n$ as the convolution of three probability densities. Now,
we write, uniformly on any compact set for $\xi$: $\hat{g}(\xi) = 1 -
\frac{1}{2} \gamma_2 \xi^2 + o(\xi^2)$, where $\gamma_2 =
\int_{\mathbb R} g(x) \, x^2 \, dx$ is the second moment of $g$. Then,
uniformly for $\xi$ in any bounded interval and $n \in {\mathbb N}$,
we get:  
\begin{eqnarray*}
   \log \hat g_n &=&
    \sum_{j=0}^{n-1} 2^j \log\left( 1 - \frac{1}{2} \gamma_2 (\xi/ 2^j) ^2 +
o((\xi/2^j) ^2)  \right) 
\\
&=& - \frac{1}{2} \gamma_2 \xi^2 \sum_{j=0}^{n-1} 2^{-j}    + O(\xi^2)\,. 
\end{eqnarray*}
Letting $n \to \infty$, we get
$$   \lim_{n \to \infty} \log \hat g_n (\xi) =
     -  \gamma_2 \xi^2     + O(\xi^2)\,, $$
uniformly for $\xi$ in any compact set of ${\mathbb R}$. Hence, this
defines $\hat g_\infty(\xi)$ as a continuous function of $\xi$ which
by Levi's continuity theorem, is the Fourier transform of a
probability measure $g_\infty$. Now, taking $n \to \infty$ in
(\ref{eq:7_1}), we get 
\begin{eqnarray}
\label{eq:7_2}
 g_\infty = g * (2 g_{\infty} (2 \cdot)) * (2 g_{\infty} (2 \cdot)).
\end{eqnarray}
which expresses $g_\infty$ as the convolution of a continuous function
$g$ with a measure $(2 g_{\infty} (2 \cdot)) * (2 g_{\infty} (2
\cdot))$. Therefore, $g_\infty$ is a continuous function and
consequently an element of $L^1({\mathbb R})$. Finally, by a simple
change of variables, (\ref{eq:7_2}) is nothing but
Eq. (\ref{eq:line_3}) with $f=g_\infty$. Therefore, $g_\infty$ is a
solution of  (\ref{eq:line_3}).  

\begin{remark}
The equilibrium distribution $g_\infty$ has a second moment that is
twice that of~$g$. Figure~\ref{fig:3} shows the solution to
equation~(\ref{eq:line_3}) in the case where $g(x) = \frac{1}{2}
1_{[-1,1]}$, where $1_{[-1,1]}$ is the indicator function of the
interval $[-1,1]$. When $g$ is a centered Gaussian, then $f$ is also a
Gaussian with twice its variance.  
\end{remark}

\begin{figure}[h]
  \centering
  \includegraphics[width=0.7\textwidth]{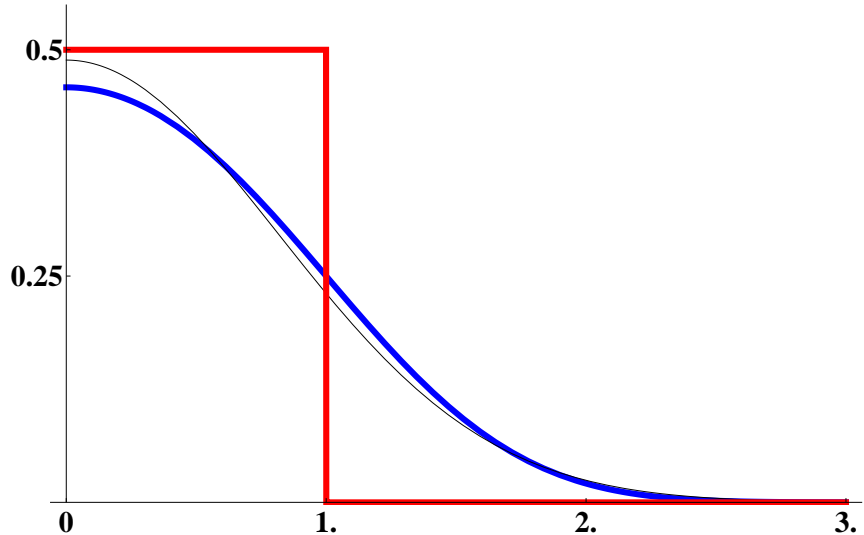}
  \caption{A solution $f$  to equation~(\ref{eq:line_3}) (the blue, thick curve) with $g(x) =
    \frac{1}{2} 1_{[-1,1]}$ (red, thick curve)   compared with
    the Gaussian function with the same variance (the thin curve).}
  \label{fig:3}
\end{figure}
 
\begin{remark}
A model where the pair interacts more weakly can be obtained by
replacing Equation~(\ref{eq:1})  with
\begin{eqnarray*}
(x_j,x_k) &\mapsto& \left(\lambda x_j +(1-\lambda)x_k  +X_1,
    (1-\lambda) x_j+\lambda x_k +X_2     \right) \,.
\end{eqnarray*}
One can then proceed in the same way by taking the Fourier transform to get
\begin{eqnarray*}
\hat{f}(\xi) &=& \hat{f}(\lambda\xi)\hat{f}( (1-\lambda)\xi) \hat{g}(\xi)\,,
\end{eqnarray*}
and as in the case of $\lambda=1/2$ obtain a solution
\begin{eqnarray*}
  \hat{f}(\xi) &=& \prod_{k=0}^{\infty}\prod_{j=0}^{k}
  \hat{g}\left(\lambda^j(1-\lambda)^{k-j} \xi
  \right)^{{k\choose j}}\,.
\end{eqnarray*}

In this case the variance of $f$ can be expressed in terms of the
variance of $g$ as
\begin{eqnarray*}
  \mbox{Var}[f] &=&  \frac{1}{2\lambda(1-\lambda)} \mbox{Var}[g]
\end{eqnarray*}
\end{remark}

Now, we are going to apply the same method to the original model posed on the circle. But we will see that the difficulties are considerably bigger.

\section{Fourier series expansion of the model on the circle} 
\label{sec:fourier}

Now, we are back to model (\ref{eq:fish}) posed on the circle. We first remark that, by the change of variables $x_2' = x_1'+y$, $y \in ]-\pi, \pi]$, we have $\hat x'_{12} = x_1' + y/2$, $x_2'-\hat x'_{12} = y/2$, so that the model can be written:
\begin{eqnarray}
\label{eq:fish_10}
\partial_t f(t,x) 
&=&\int_{-\pi}^{\pi}\int_{-\pi}^{\pi}
\bigg(f(t,x')f(t,x'+y) 
g(x-x'-\frac{y}{2}) 
\nonumber \\
&&\quad\qquad\qquad\qquad - f(t,x)f(t,x+y)\bigg)
\beta(|\sin(y/2)|)\dxp \dy\,.
\end{eqnarray}

Multiplying with a test function $\phi$, integrating
over $[-\pi,\pi]$, and performing a change of variables gives the
following weak form of the equation,
\begin{eqnarray}
\label{eq:fish_w}
\lefteqn{  
\frac{d}{dt} \int_{S^1}f(t,x)\phi(x)\,\dx 
}
&&\nonumber \\
&=&\int_{-\pi}^{\pi}\int_{-\pi}^{\pi}\int_{-\pi}^{\pi}
f(t,x)f(t,x+y)g(z)\tibeta(y)
\left(
\phi(x+y/2 + z) - \phi(x) \right) \frac{dx}{2 \pi} \frac{dy}{2 \pi} \frac{dz}{2 \pi} \, . 
\nonumber\\
\end{eqnarray}
We will only consider the cases where $\tibeta=1$ (Maxwellian molecules) or $\tibeta=|\sin(y/2)|$ (hard-sphere case).

Note that formally the system conserves mass:
$$ \int_{-\pi}^{\pi} f(x,t) \, dx = \mbox{Constant}. $$
We may therefore require that $f(x,t) \, dx$ is a probability, i.e. take this constant equal to unity. 

Because all functions are
periodic, it is natural to consider to rewrite the system in terms of
the Fourier series. Introducing 
\begin{eqnarray*}
  f(x) &=& \sum_{k=-\infty}^{\infty} a_ke^{i k x}\qquad\qquad a_k
  \,=\,\int_{-\pi}^{\pi} f(x) e^{-ikx}\,\dx\,. \\
\gamma_k&=&(2\pi)^{-1}\int_{-\pi}^{\pi} g(z)e^{-ikz}dx, \qquad
\Gamma(u) =(2\pi)^{-1} \int_{-\pi}^{\pi} \tibeta(y)e^{iuy}dy, 
\end{eqnarray*}
we have the following: 

\begin{proposition}
Suppose that $g$ is even and let $a_k(t)$ be the Fourier coefficients of a solution of Eq. (\ref{eq:fish_10}) which is an even probability density. Then,  $a_0 = 1$ and $a_k$ for $k \not = 0$ satisfy $a_{-k}=a_k$ and solve the following system: 
\begin{eqnarray} 
\label{eq:coscoeffeq}
  \frac{d}{dt} a_k(t) &=& \left(\, 2 \gamma_k\Gamma(k/2)-\Gamma(0)-\Gamma(k)\, \right) a_k(t) + \nonumber \\
& &\sum_{n=1}^{k-1}  \left( \gamma_k \Gamma(n-k/2)-\Gamma(n)\right) a_n(t) a_{k-n}(t) +  \nonumber \\
& &\sum_{n=k+1}^{\infty}  \left( 2 \gamma_k \Gamma(n-k/2)-\Gamma(n)-\Gamma(n-k) \right) a_n(t) a_{n-k}(t)
\end{eqnarray}
The function $\Gamma(u)$,  which is to be evaluated only on half-integer
points,  is 
  \begin{eqnarray}
  \label{eq:gammaMax}
    \Gamma(u) &=& \frac{\sin(\pi u)}{\pi u} = 
    \left\{
      \begin{array}{lcl}
        1 &\qquad \mbox{when}\qquad & u=0\\
        0 &\qquad \mbox{when}\qquad & u\in\Z\setminus\{0\}\\
        \frac{2(-1)^{\ell}}{\pi(2 \ell+1)} &\qquad \mbox{when}\qquad u= \ell+1/2 
      \end{array}\right.\nonumber\\
  \end{eqnarray}
in the Maxwellian case, when $\tibeta(1)\equiv 1$; and 
  \begin{eqnarray*}
    \Gamma(u) &=& \frac{2-4 u \sin (\pi  u)}{\pi -4 \pi  u^2}= 
    \left\{
      \begin{array}{lcl}
        2/(\pi(1-4 u^2)) &\qquad \mbox{when}\qquad & u\in \Z\\
        1/\pi  &\qquad \mbox{when}\qquad & u=\pm 1/2\\
        \frac{2 (-1)^{\ell} \ell+(-1)^{\ell}-1}{2 \pi  \ell^2+2 \pi  \ell}&\qquad
        \mbox{when}\qquad& u= \ell+1/2, \ell \ne 0,-1  
      \end{array}\right. \,, 
			\\
&&
  \end{eqnarray*}
 in the hard-sphere case, when $\tibeta(y) = |\sin(y/2)|$. 
\label{prop:Fourier}
\end{proposition}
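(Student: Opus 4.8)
\medskip
\noindent\textbf{Proof proposal.}
The statement is, in substance, the result of inserting the Fourier series into the bilinear collision operator and then reorganizing a doubly-infinite sum, so the plan is computational. I would start from the weak form~(\ref{eq:fish_w}) with test function $\phi(x)=e^{-ikx}$ (equivalently, multiply~(\ref{eq:fish_10}) by $e^{-ikx}$ and integrate $\dx$). Under the mild regularity assumption $f(t,\cdot)\in L^2(S^1)$ — which gives $(a_m)\in\ell^2$ and hence absolute convergence of every bilinear sum that appears — Fubini and the orthogonality relations $\int_{-\pi}^{\pi}e^{i(m-n)x}\,\dx=\delta_{mn}$ justify all the interchanges. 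Writing $f(t,x)=\sum_m a_m e^{imx}$ and $f(t,x+y)=\sum_n a_n e^{in(x+y)}$: in the gain term the substitution $z=x-x'-y/2$ and the $x$-integration produce the factor $\gamma_k$, the $x'$-integration forces $m=k-n$, and the remaining $y$-integration produces $\Gamma(n-k/2)$; in the loss term the integrand does not depend on $x'$, the $x$-integration again forces $m=k-n$, and the $y$-integration produces $\Gamma(n)$. Collecting terms yields the compact identity
\begin{equation*}
\frac{d}{dt}a_k(t)=\sum_{n\in\Z}\bigl(\gamma_k\,\Gamma(n-k/2)-\Gamma(n)\bigr)\,a_n(t)\,a_{k-n}(t),\qquad k\in\Z .
\end{equation*}
Here one uses that $g$ even and real gives $\gamma_k=\gamma_{-k}\in\R$, that $\tibeta(y)=\beta(|\sin(y/2)|)$ is even in $y$ so that $\Gamma(u)=\Gamma(-u)\in\R$, and that $f$ even gives $a_{-k}=a_k$; together these make the even-symmetric sequences an invariant set for the dynamics and the reduced system real. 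The case $k=0$ gives $\frac{d}{dt}a_0=0$ (using $\gamma_0=1$), consistent with the normalization $a_0=\int_{-\pi}^{\pi}f\,\dx=1$.

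The second step, and the only one requiring a little care, is to fold this bilateral sum, for $k\ge1$, into the three blocks of~(\ref{eq:coscoeffeq}). The terms $n=0$ and $n=k$ both carry a factor $a_0=1$; since $\Gamma(-k/2)=\Gamma(k/2)$ they add up to $\bigl(2\gamma_k\Gamma(k/2)-\Gamma(0)-\Gamma(k)\bigr)a_k$, the linear part. The terms $1\le n\le k-1$ are already in final form and give the first (finite, empty for $k=1$) sum. For the tails I would set $n=k+p$ with $p\ge1$ in the block $n\ge k+1$ and $n=-p$ with $p\ge1$ in the block $n\le-1$; using $a_{-p}=a_p$ and $\Gamma(-p-k/2)=\Gamma(p+k/2)$, the two contributions at a fixed $p$ combine into $\bigl(2\gamma_k\Gamma(p+k/2)-\Gamma(k+p)-\Gamma(p)\bigr)a_p a_{k+p}$, and relabeling $n=k+p$ reproduces exactly the third sum of~(\ref{eq:coscoeffeq}). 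Getting the factor $2$ right on the $\Gamma(k/2)$ term and on the $n>k$ block, without double-counting, is the main bookkeeping point.

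The last step is to evaluate, at the (half-)integer arguments occurring in~(\ref{eq:coscoeffeq}),
\begin{equation*}
\Gamma(u)=\frac{1}{2\pi}\int_{-\pi}^{\pi}\tibeta(y)\,e^{iuy}\,dy=\frac{1}{\pi}\int_{0}^{\pi}\tibeta(y)\,\cos(uy)\,dy .
\end{equation*}
For $\tibeta\equiv1$ this is immediate: $\Gamma(u)=\sin(\pi u)/(\pi u)$, with $\Gamma(0)=1$ by continuity, $\Gamma(\ell)=0$ for $\ell\in\Z\setminus\{0\}$, and $\Gamma(\ell+\tfrac12)=\cos(\pi\ell)/\bigl(\pi(\ell+\tfrac12)\bigr)=2(-1)^{\ell}/\bigl(\pi(2\ell+1)\bigr)$. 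For $\tibeta(y)=|\sin(y/2)|$ I would use $\sin(y/2)\cos(uy)=\tfrac12\bigl(\sin((u+\tfrac12)y)-\sin((u-\tfrac12)y)\bigr)$, integrate over $[0,\pi]$, simplify with $\cos\bigl((u\pm\tfrac12)\pi\bigr)=\mp\sin(\pi u)$, and combine over the common denominator $u^2-\tfrac14$; this gives $\Gamma(u)=(2-4u\sin(\pi u))/(\pi-4\pi u^2)$. The apparent poles at $u=\pm\tfrac12$ are removable, since a direct computation gives $\Gamma(\pm\tfrac12)=\tfrac1\pi\int_0^{\pi}\tfrac12\sin y\,dy=1/\pi$. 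At $u\in\Z$ one has $\sin(\pi u)=0$, hence $\Gamma(u)=2/\bigl(\pi(1-4u^2)\bigr)$; and at $u=\ell+\tfrac12$ with $\ell\ne0,-1$, using $\sin\bigl(\pi(\ell+\tfrac12)\bigr)=(-1)^{\ell}$ together with $\pi-4\pi(\ell+\tfrac12)^2=-4\pi\ell(\ell+1)$, one obtains the stated $\bigl(2(-1)^{\ell}\ell+(-1)^{\ell}-1\bigr)/\bigl(2\pi\ell^2+2\pi\ell\bigr)$.

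I do not expect a genuine obstacle. The real content is (i) the Fourier substitution, (ii) the careful reindexing of the bilateral sum — where the factors of $2$ and the identification of $n\in\{0,k\}$ as the ``linear'' terms must be tracked correctly — and (iii) an elementary but sign-sensitive trigonometric integral for $\Gamma$ in the hard-sphere case, together with the recognition that its closed form has only removable singularities at $u=\pm\tfrac12$. The one analytic point worth recording is the hypothesis ($f\in L^2(S^1)$, say) under which the term-by-term manipulations are legitimate.
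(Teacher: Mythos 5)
Your proposal is correct and follows essentially the same route as the paper: plug $\phi(x)=e^{-ikx}$ into the weak form, integrate out $x$, $y$, $z$ to obtain the bilateral identity $\frac{d}{dt}a_k=\sum_{n\in\Z}\bigl(\gamma_k\Gamma(n-k/2)-\Gamma(n)\bigr)a_n a_{k-n}$, and then fold it using $a_{-n}=a_n$, $\gamma_{-k}=\gamma_k$, $\Gamma(-u)=\Gamma(u)$. You supply some bookkeeping the paper compresses into one line — the explicit pairing of $n\in\{0,k\}$ as the linear part and the $n\mapsto k+p$ / $n\mapsto -p$ matching for the tail, plus the elementary integrals defining $\Gamma$ — but the underlying argument is the same.
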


\medskip
\noindent
{\bf Proof.} Taking $\phi(x)=e^{-ikz}$ in (\ref{eq:fish_w}), we get (with $a_k=a_k(t)$) for $k \not = 0$
\begin{eqnarray}
  \frac{d}{dt} a_k &=&
\sum_{n}\sum_{m} a_m a_n
\int_{-\pi}^{\pi}\int_{-\pi}^{\pi}\int_{-\pi}^{\pi}
e^{i m x} e^{i n (x+y)} g(z)\tibeta(y)
\left(
e^{-ik(x+y/2 + z)} - e^{-ik x} \right) \dx \dy \dz \nonumber \\
&=&\sum_{n}\sum_{m} a_m a_n
\int_{-\pi}^{\pi}\int_{-\pi}^{\pi}\int_{-\pi}^{\pi} g(z)\tibeta(y)
\left(
e^{i ( (m+n-k)x +(n-k/2)y -kz)} - \right. \nonumber \\
&&\qquad\qquad\qquad\qquad \qquad\qquad\qquad\qquad \qquad\qquad\qquad\qquad \left. e^{i( m+n-k)x+ny   )}
\right)\dx \dy \dz \nonumber \\
&=&\sum_{n} a_{k-n} a_n
\int_{-\pi}^{\pi}\int_{-\pi}^{\pi} g(z)\tibeta(y)
\left(
e^{i ((n-k/2)y -kz)} - e^{i( ny   )}
\right) \dy \dz\nonumber 
\end{eqnarray}
which leads to
\begin{eqnarray}
\label{eq:fcoeffeq}
 \frac{d}{dt} a_k(t) &=&  \sum_{n} a_{k-n}(t) a_n(t) \left( \gamma_k
   \Gamma(n-k/2) - \Gamma(n)\right)\,\nonumber \\
&=& \sum_{i+j=k} a_i(t)a_j(t)  \left( \gamma_k
   \Gamma((j-i)/2) - \Gamma(j)\right)\,.
\end{eqnarray}
Using that $\gamma_{-k} = \gamma_k$ and $a_{-k} = a_k$, we get (\ref{eq:coscoeffeq}). 

%

\begin{remark}
Eq.~(\ref{eq:coscoeffeq}) for the Maxwellian case can be simplified and gives:
\begin{eqnarray*}
  \frac{d}{dt} a_k(t) &=& \left(\, 2 \gamma_k\Gamma(k/2)-1\, \right) a_k(t) + 
	\\ 
& &\sum_{n=1}^{k-1}  \gamma_k \Gamma(n-k/2) a_n(t) a_{k-n}(t) + 
\\
& &\sum_{n=k+1}^{\infty}  2 \gamma_k \Gamma(n-k/2) a_n(t) a_{n-k}(t)
\end{eqnarray*}
\end{remark}

\begin{remark}
For comparison, we note that the Fourier coefficients of solutions to
equation~(\ref{eq:bennaim}) satisfy
\begin{eqnarray*}
  \frac{d}{dt}a_k(t) &=& -(1+D k^2) a_k(t) + \sum_{i+j=k} \Gamma( (i-j)/2
  ) a_j(t) a_i(t)\,,
\end{eqnarray*}
with $\Gamma$ as in equation~(\ref{eq:gammaMax}) (see
\cite{BenNaimKrapivsky2006}). The only essential difference with
equation~(\ref{eq:fcoeffeq}) is that  the diffusion term manifests
itself as a multiplier $D k^2$ of $a_k$ (and moreover that
(\ref{eq:fcoeffeq}) includes the possibility of non-Maxwellian interactions). 
\end{remark}

\section{The linearized equation}
\label{sec:linearized}

 It is easy to verify that $f(x)\equiv 1$
is a solution, which corresponds to  $a_0=1, a_k=0$, 
$(k\ne0)$. If $f$ is a solution, then any
translation of $f$, i.e. $x\mapsto f(x+s)$) is also a
solution. Expressed in terms of the Fourier coefficients, this means
that if $(a_k)_{k\in\Z}$ is a solution, then so is $(a_k
e^{iks})_{k\in\Z}$.

To investigate the stability of the uniform density, let $f(x,t) = 1 +
\varepsilon F(x,t)$, and let $b_k(t),\;k\in\Z$ be the Fourier
coefficients of $F(x,t)$. Then $b_0=0$, and for $k\ne 0$,
\begin{eqnarray*}
  \frac{d}{dt} b_k(t) &=& b_k(t) \left( 2 \gamma_k \Gamma(k/2)-\Gamma(0)-\Gamma(k)\right)\,.
\end{eqnarray*}
Hence the linearized stability may be determined by analyzing
separately the sign of $\mbox{Re} \lambda_k$ where
\begin{equation}
\lambda_k = (2\gamma_k \Gamma(k/2)  -\Gamma(0)-\Gamma(k)). 
\label{eq:stab}
\end{equation}
Indeed, if $\mbox{Re} \lambda_k \leq 0$, $\forall k \in {\mathbb Z}$, the system is stable, and it is unstable otherwise. Note that $\lambda_0 = 0$ and $\lambda_k \in {\mathbb R}$, $\forall k \in {\mathbb Z}$ in our case. 

\begin{remark}
The uniform density is also stationary for the model
in~\cite{BenNaimKrapivsky2006}, where its stability is analyzed in
very much the same way, giving an explicit expression involving the
only parameter in the model, the diffusion coefficient $D$.
\end{remark}

We assume that $g$ is even. In both the Maxwellian and hard-sphere case, we have the:

\begin{theorem}
We have $ \lambda_k \leq 0$, $\forall k \in {\mathbb Z}$, $|k|\geq 2$, meaning that the linearized stability depends only on the sign of $\lambda_1 = \lambda_{-1}$:
\begin{eqnarray*}
\mbox{the system is stable} & \Longleftrightarrow & \lambda_1 \leq 0 \\
\end{eqnarray*}
\label{thm:linearized_stability}
\end{theorem}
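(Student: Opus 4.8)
The plan is to substitute the explicit values of $\Gamma$ given in Proposition~\ref{prop:Fourier} into the expression (\ref{eq:stab}) for $\lambda_k$ and estimate, treating the Maxwellian kernel and the hard-sphere kernel in turn and, within each, splitting according to the parity of $k$. Since $g$ is even and a probability density, the coefficients $\gamma_k$ are real, $\gamma_{-k}=\gamma_k$, and $|\gamma_k|\le\gamma_0=1$; hence $\lambda_k=\lambda_{-k}$ and it suffices to treat $k\ge2$. The whole statement then reduces to
\[
 2\gamma_k\,\Gamma(k/2)\ \le\ \Gamma(0)+\Gamma(k)\qquad(k\ge2),
\]
which I would check for each $k$: on the left one has $2|\gamma_k\Gamma(k/2)|\le2|\Gamma(k/2)|$, with $|\Gamma(k/2)|$ equal to $0$ (when $k$ is even) or of order $1/k$ (when $k$ is odd), while the right-hand side is a fixed positive constant (Maxwellian case) or is bounded below away from $0$ (hard-sphere case).

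In the Maxwellian case $\Gamma(0)=1$ and $\Gamma(k)=0$ for every nonzero integer $k$, so the inequality is just $2\gamma_k\Gamma(k/2)\le1$. If $k$ is even then $k/2\in\Z\setminus\{0\}$, so $\Gamma(k/2)=0$ and $\lambda_k=-1$. If $k$ is odd, $k=2\ell+1\ge3$, then $\Gamma(k/2)=2(-1)^\ell/(\pi k)$, so $|2\gamma_k\Gamma(k/2)|\le 4/(3\pi)<1$ and $\lambda_k<0$. (For $k=\pm1$ one recovers $\lambda_1=\tfrac4\pi\gamma_1-1$, which changes sign exactly at $\gamma_c=\pi/4$.) Thus in the Maxwellian model the single estimate $|\gamma_k|\le1$ settles everything.

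For the hard-sphere kernel $\Gamma(0)=\tfrac2\pi$ and $\Gamma(k)=\tfrac{2}{\pi(1-4k^2)}$ for integer $k$, so $\Gamma(0)+\Gamma(k)=\tfrac2\pi\bigl(1-\tfrac1{4k^2-1}\bigr)\ge\tfrac{28}{15\pi}$ for $k\ge2$. For $k$ even, $\Gamma(k/2)=-\tfrac{2}{\pi(k^2-1)}$, hence $2|\gamma_k\Gamma(k/2)|\le\tfrac{4}{\pi(k^2-1)}\le\tfrac{4}{3\pi}<\tfrac{28}{15\pi}$, giving $\lambda_k<0$. For $k$ odd, $k=2\ell+1$, one has $|\Gamma(k/2)|\le\tfrac{2}{\pi(k-1)}$, so for $k\ge5$ this gives $2|\gamma_k\Gamma(k/2)|\le\tfrac{4}{\pi(k-1)}\le\tfrac1\pi<\tfrac2\pi\cdot\tfrac{98}{99}\le\Gamma(0)+\Gamma(k)$, and again $\lambda_k<0$. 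The remaining case $k=3$, where $\Gamma(3/2)=-\tfrac1\pi$, $\Gamma(0)+\Gamma(3)=\tfrac{68}{35\pi}$ and hence $\lambda_3=-\tfrac2\pi\bigl(\gamma_3+\tfrac{34}{35}\bigr)$, is the one place where the crude bound $|\gamma_3|\le1$ just barely fails to yield $\lambda_3\le0$; this is the main obstacle.

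To close the case $k=3$ one must use slightly more about the noise than its evenness. If $g$ has nonnegative Fourier coefficients — as do the standard noise laws (wrapped Gaussian, von Mises, wrapped Cauchy) — then $2\gamma_3\Gamma(3/2)=-\tfrac2\pi\gamma_3\le0<\Gamma(0)+\Gamma(3)$; alternatively, if $g$ is symmetric and unimodal, writing $g$ as a mixture of uniform densities $\tfrac{1}{2s}\one_{[-s,s]}$ gives $\gamma_3=\int_0^\pi\sinc(3s)\,d\mu(s)\ge\min_{t>0}\sinc(t)>-\tfrac{34}{35}$; either way $\lambda_3<0$. Combined with the estimates above, this proves $\lambda_k\le0$ for all $|k|\ge2$, and the equivalence in the statement is then immediate from the discussion preceding (\ref{eq:stab}). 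What remains is only the routine bookkeeping of the finitely many small-$k$ inequalities together with the uniform tail bound $|\Gamma(k/2)|=O(1/k)$.
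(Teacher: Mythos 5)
Your approach — substitute the explicit $\Gamma$ values and bound $2\gamma_k\Gamma(k/2)$ by $2|\Gamma(k/2)|$ using $|\gamma_k|\le1$ — is the same one the paper uses, and your Maxwellian computation matches theirs exactly. What is different, and genuinely valuable, is that you carry the hard-sphere estimates through carefully enough to see that the paper's proof has a gap. The paper computes $2\Gamma(k/2)-\Gamma(0)-\Gamma(k)$ and shows it is negative for $k\ge2$; this only handles $\gamma_k=1$, i.e.\ it only shows $\lambda_k\le0$ in the worst case when $\Gamma(k/2)>0$. When $\Gamma(k/2)<0$ the relevant worst case is $\gamma_k=-1$, and here the paper merely asserts, ``if $\gamma_k$ changes sign the calculation is more complicated, but the result is the same,'' with no computation. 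Your formula $\lambda_3=-\frac{2}{\pi}\bigl(\gamma_3+\frac{34}{35}\bigr)$ shows the assertion is in fact false without an extra hypothesis: taking an even density concentrated near $x=\pm\pi/3$ gives $\gamma_3$ arbitrarily close to $-1$, and then $\lambda_3>0$.

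So the conclusion is not that your proof is incomplete — it is that the theorem, as stated for a generic even $g$, is overstated in the hard-sphere case, and your proposal is the one that correctly flags this. Your two suggested repairs are both reasonable. Requiring $\gamma_k\ge0$ is in fact implicitly satisfied by the family $g_\tau$ the authors introduce right after the theorem (periodization of a scaled $\rho$ with $\hat\rho\ge0$, e.g.\ Gaussian), so that hypothesis does no harm to what they go on to do. The unimodality argument via mixtures of symmetric uniforms, giving $\gamma_3\ge\min_{t>0}\sinc(t)\approx-0.217>-\frac{34}{35}$, is also sound. Either way, the extra hypothesis should be stated explicitly. The one small clean-up: your claim ``$|\Gamma(k/2)|\le\frac{2}{\pi(k-1)}$'' for $k=2\ell+1$ odd is correct (it is $\frac{1}{\pi(\ell+1)}$ for $\ell$ even and $\frac{1}{\pi\ell}$ for $\ell$ odd, both at most $\frac{1}{\pi\ell}=\frac{2}{\pi(k-1)}$), so the $k\ge5$ bound does close, and the only exceptional index is indeed $k=3$.
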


\noindent
{\bf Proof.}  In the Maxwellian case, we have
\begin{eqnarray*}
  2\Gamma(k/2)-\Gamma(0)-\Gamma(k) &=& 
\frac{4 \sin \left(\frac{k \pi }{2}\right)}{k \pi   }-1 . 
\end{eqnarray*}
It is easily seen that the right-hand side is negative when $|k| \geq 2$. Hence it is only $\lambda_1$ that may become positive, and therefore the condition for stability of the uniform solution is that $\gamma_1 \le \frac{\pi}{4}$.

In the hard-sphere case, we find that $2 \Gamma(1/2) -\Gamma(0)-\Gamma(1)= 2/(3\pi)$, and that for $k>1$,
\begin{eqnarray*}
   2\Gamma(k/2)-\Gamma(0)-\Gamma(k) &=& -\frac{4 \left(2 k^4-4 \sin \left(\frac{k \pi
   }{2}\right) k^3+k^2+\sin \left(\frac{k \pi
   }{2}\right) k\right)}{\left(k^2-1\right)
   \left(4 k^2-1\right) \pi }
\end{eqnarray*}
Because $\Gamma$ is an even function, it is enough to consider $k\ge2$, and in that case the numerator is larger than
\begin{eqnarray*}
4 \left(2 k^4-4 \sin \left(\frac{k \pi
   }{2}\right) k^3+k^2+\sin \left(\frac{k \pi
   }{2}\right) k\right) &\ge&
4 \left(2 k^4-4 k^3+k^2- k\right) \\
&\ge& 4(k^2-k)>0\,
\end{eqnarray*}
and hence we may deduce that $\lambda_k<0$ for $|k|>1$ also in this case. If $\gamma_k$ changes sign the calculation is more complicated, but the result is the same: it is only the first Fourier modes of the solution $f$ that may cause instability of the uniform stationary states.

\medskip
For concreteness, we now consider a family of distributions $g(y)$ defined as the periodization of $\frac{1}{\tau}\rho(\frac{y}{\tau})$, where $\rho$ is a given even probability density on $\R$:
\begin{eqnarray*}
  g_{\tau}(y) &=& 2\pi\sum_{j=-\infty}^{\infty}
  \frac{1}{\tau}\rho(\frac{y-2\pi j}{\tau}) . 
\end{eqnarray*}
Then 
\begin{eqnarray*}
  \gamma_k(\tau) &=& \int_{-\pi}^{\pi} e^{-i ky} 2\pi \sum_{j=-\infty}^{\infty}
  \frac{1}{\tau}\rho(\frac{y-2\pi j}{\tau})\,\dy\,=\,
\int_{-\infty}^{\infty} e^{-i \tau k y} \rho(y)\,dy
  \,=\, \hat{\rho}(\tau k) \,.
\end{eqnarray*}
An example is $\rho(x) = \frac{1}{\sqrt{2\pi}}e^{-x^2/2}$ which gives
$\hat{\rho}(\tau k) = e^{- (\tau k)^2/2 }$. 
When $\tau$ is small, the noise is small, and when $\tau$ is large, the noise is also very
large, and $g_{\tau}$ converges to the uniform distribution when
$\tau\rightarrow\infty$. Therefore, $\gamma_1(\tau)$ is a continuous function of $\tau$ with $\gamma_1(0) = 1$ and  $\gamma_1(\tau) \to 0$ as $\tau \to \infty$.  Then $\lambda_1= \lambda_1(\tau) \leq 0$ for $\tau$ large and $\lambda_1 > 0$ for $\tau$ small. This shows that the system is linearly stable for large values of $\tau$ and unstable for small ones.

\section{An explicit example with bifurcation}
\label{sec:explicit}

The calculation here is restricted to the Maxwellian case, and we only
look for even solutions, expressed as a Fourier cosine series.
Hence we wish to solve
\begin{eqnarray}
\label{eq:coscoeffeqMaxstat}
  a_k &=&  2 \gamma_k\Gamma(k/2) a_k +  \gamma_k \sum_{n=1}^{k-1}  \Gamma(n-k/2) a_n a_{k-n} + \nonumber \\
& &2 \gamma_k \sum_{n=k+1}^{\infty}   \Gamma(n-k/2) a_n a_{n-k}
\end{eqnarray}
for $k\ge 1$. Note that $\gamma_k$ is a factor for all terms in the
right hand side, implying that if $g$ only has finitely many terms in
the Fourier series, only the corresponding terms are nonzero in~$f$.
So, here we make the following hypothesis: 

\begin{hypothesis}\label{hyp}
We assume that $g=g_{\gamma_1}$ is a family of noise distributions
with a finite number of non-zero Fourier coefficients: for some
$N<\infty$,  
$$ g_{\gamma_1}(x) = 1 + 2 \gamma_1 \, \cos x+ 2 \sum_{k=2}^N
\gamma_k(\gamma_1) \, \cos kx, \quad \forall x \in ]-\pi, \pi]. $$ 
with  $C^2$ functions $\gamma_1 \in [0,1] \mapsto \gamma_k(\gamma_1)
\in [-1,1]$ and with $\gamma_2$ such that 

$$ \gamma_2(\gamma_1) >0. $$ 
\label{hyp:lisse}
\end{hypothesis}
Note that $g$ is a probability measure as soon as $g \geq 0$. We can now state the following

\begin{theorem}
Consider a one-parameter family of noise functions $g_{\gamma_1}$
satisfying Hypothesis \ref{hyp:lisse}. Then:  
  \begin{itemize}
  \item[(i)] The uniform distribution, with Fourier coefficients $a_0=1,
    a_k=0 \;\;(k\ge1)$ is stationary. It is stable for $\gamma_1 <
    \pi/4$ and unstable for 
    $\gamma_1>\pi/4$.
  \item[(ii)] In an interval $\frac{\pi}{4}<\gamma_1 < \gamma_{max}$ there
    is another invariant solution to the dynamic problem, with Fourier
    coefficients $a_0=1, \\ a_1 = \sqrt{\frac{12 (\gamma_1-\pi/4)}{\pi
        \gamma_2(\pi/4)}}+ \bigoh((\gamma_1-\pi/4)^{3/2}) ,...., a_k=0
    \;\;(k>N)$\,. 
\item[(iii)] This solution is linearly stable with a leading eigenvalue
  $\lambda(\gamma_1)= 1 - \frac{8}{\pi}(\gamma_1-\pi/4) +
  \bigoh((\gamma_1-\pi/4)^{3/2})$. 
  \end{itemize}
\label{thm:stab_nonuniform}
\end{theorem}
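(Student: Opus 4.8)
The plan is to take (i) from the linear analysis of Section~\ref{sec:linearized}, and to obtain (ii)--(iii) by a Lyapunov--Schmidt reduction of the stationarity condition \eqref{eq:coscoeffeqMaxstat} to a scalar bifurcation equation in $\gamma_1$, with stability extracted from a center-manifold reduction of \eqref{eq:coscoeffeq}. First observe that, under Hypothesis~\ref{hyp:lisse}, $\gamma_k$ is a common factor of every term on the right of \eqref{eq:coscoeffeqMaxstat}, so any stationary solution has $a_k=0$ for $k>N$; likewise the modes $k>N$ of \eqref{eq:coscoeffeq} obey $\dot a_k=-a_k$, so the linearized evolution is block upper-triangular with complementary block $-I$ and everything is governed by the finite system for $a=(a_1,\dots,a_N)$, in which the uniform state is the equilibrium $a=0$ with linearization $\mathrm{diag}(\lambda_1,\dots,\lambda_N)$, the $\lambda_k$ as in \eqref{eq:stab}. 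Since $\lambda_1=\tfrac4\pi\gamma_1-1=\tfrac4\pi\bigl(\gamma_1-\tfrac\pi4\bigr)$ while, by Theorem~\ref{thm:linearized_stability} (in fact strictly and uniformly in $\gamma_1\in[0,1]$), $\lambda_k<0$ for $k\ge2$, statement (i) follows.

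For (ii), write \eqref{eq:coscoeffeqMaxstat} as $\lambda_k(\gamma_1)a_k=-Q_k(a;\gamma_1)$, $1\le k\le N$, with $Q_k$ a homogeneous quadratic with coefficients $C^2$ in $\gamma_1$. Since $\lambda_k(\pi/4)\ne0$ for $k\ge2$, the implicit function theorem applied to the equations $k=2,\dots,N$ in $(a_2,\dots,a_N)$, with $(a_1,\gamma_1)$ as parameters near $(0,\pi/4)$, gives unique $C^2$ maps $a_k=\Phi_k(a_1,\gamma_1)$ with $\Phi_k(0,\gamma_1)=0$ and $\partial_{a_1}\Phi_k(0,\gamma_1)=0$ (because $Q_k$ vanishes to second order), hence $\Phi_k=\bigoh(a_1^2)$. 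In fact, in the Maxwellian case only the index pairs $(i,j)=(1,1),(2,0)$ contribute to the $k=2$ equation (all others carry a factor $\Gamma$ at a nonzero integer, which vanishes by \eqref{eq:gammaMax}), so $a_2=\gamma_2a_1^2$ identically and, inductively, $\Phi_k=\bigoh(a_1^k)$. Substituting $a_n=\Phi_n$ into the $k=1$ equation, $-\lambda_1(\gamma_1)a_1=2\gamma_1\sum_{n=2}^N\Gamma(n-\tfrac12)a_na_{n-1}$, whose right side is $\bigoh(a_1^3)$ with leading term $2\gamma_1\Gamma(\tfrac32)a_2a_1=-\tfrac{4}{3\pi}\gamma_1\gamma_2a_1^3$; dividing by $a_1$ yields the bifurcation equation $\lambda_1(\gamma_1)=a_1^2H(a_1,\gamma_1)$ with $H\in C^1$ and $H(0,\tfrac\pi4)=\tfrac13\gamma_2(\tfrac\pi4)>0$ by Hypothesis~\ref{hyp:lisse}. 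As $\partial_{\gamma_1}\lambda_1=4/\pi\ne0$, one more application of the implicit function theorem solves for $\gamma_1=\psi(a_1)=\tfrac\pi4+\tfrac{\pi}{12}\gamma_2(\tfrac\pi4)a_1^2+\bigoh(a_1^4)$, and inverting gives $a_1=\bigl(\tfrac{12(\gamma_1-\pi/4)}{\pi\gamma_2(\pi/4)}\bigr)^{1/2}+\bigoh\bigl((\gamma_1-\tfrac\pi4)^{3/2}\bigr)$, which has a real nonzero solution precisely for $\gamma_1>\pi/4$ --- the pitchfork --- on some $(\pi/4,\gamma_{max})$, with $f=1+2\sum_{k\le N}a_k\cos kx\ge0$ once $\gamma_1$ is near enough $\pi/4$ since all $a_k\to0$. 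The exponent is exactly $1/2$ because of a hidden $\Z_2$ symmetry: translation by $\pi$ acts as $a_k\mapsto(-1)^k a_k$, preserving \eqref{eq:coscoeffeq} and evenness, so the reduced equation on the one-dimensional center subspace is odd in $a_1$ and the lowest correction to $\lambda_1a_1$ is the cubic $-\tfrac{4}{3\pi}\gamma_1\gamma_2a_1^3$, not a quadratic one.

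For (iii) I would run a parameter-dependent center-manifold reduction of \eqref{eq:coscoeffeq} at $(a,\gamma_1)=(0,\pi/4)$: the linearization there is $\mathrm{diag}(0,\lambda_2,\dots,\lambda_N)$ with $\lambda_k<0$ for $k\ge2$, so the center manifold is one-dimensional, carries $a_2=\gamma_2a_1^2+\cdots$, and the reduced flow is $\dot a_1=\lambda_1(\gamma_1)a_1-\tfrac{4}{3\pi}\gamma_1\gamma_2a_1^3+\cdots$. Its nontrivial equilibrium --- the branch of (ii) --- has reduced eigenvalue $\lambda_1+3\bigl(-\tfrac{4}{3\pi}\gamma_1\gamma_2\bigr)a_1^2=-2\lambda_1(\gamma_1)+\bigoh\bigl((\gamma_1-\tfrac\pi4)^{3/2}\bigr)=-\tfrac8\pi(\gamma_1-\tfrac\pi4)+\bigoh\bigl((\gamma_1-\tfrac\pi4)^{3/2}\bigr)<0$, while the transverse eigenvalues are $\bigoh\bigl((\gamma_1-\tfrac\pi4)^{1/2}\bigr)$-perturbations of the $\lambda_k<0$ ($k\ge2$) and stay uniformly in the open left half-plane; this gives linear stability (within even densities; in the full space one quotients by the translational zero mode). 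Writing the linearized operator as $G-I$ with $G$ the linearized gain operator (the $-I$ from $\tibeta\equiv1$), the leading eigenvalue of $G$ on the branch is $1-\tfrac8\pi(\gamma_1-\tfrac\pi4)+\bigoh\bigl((\gamma_1-\tfrac\pi4)^{3/2}\bigr)<1$, which is the normalization recorded in (iii).

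The main obstacle is the Lyapunov--Schmidt step and the leading-order expansion of the reduced equation: one must check that $a_2$ --- and no higher mode --- feeds into the $a_1$-equation at cubic order with a coefficient $-\tfrac{4}{3\pi}\gamma_1\gamma_2$ of fixed sign. This single fact simultaneously delivers the exponent $1/2$, the explicit prefactor $\bigl(12/(\pi\gamma_2(\pi/4))\bigr)^{1/2}$, the supercriticality (branch on the side $\gamma_1>\pi/4$) and the linear stability; the assumption $\gamma_2(\pi/4)>0$ is precisely the needed non-degeneracy, and the exact half-integer values of $\Gamma$ from \eqref{eq:gammaMax} enter here. The remaining work --- uniform $C^2$ control of the $\gamma_k(\gamma_1)$ and of the $\Phi_k$ over $(\pi/4,\gamma_{max})$, and bookkeeping of the $\bigoh$-remainders --- is routine.
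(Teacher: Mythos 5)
Your proof is correct, and it takes a genuinely different route from the paper's for both (ii) and (iii), so a brief comparison is in order.

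For (ii), the paper does not perform a textbook Lyapunov--Schmidt reduction. Instead it exploits the Maxwellian degeneracy $a_{2m}=\gamma_{2m}a_m^2$ to eliminate the even modes in closed form, introduces the rescaled variables $\tilde a_p=a_p/a_1$ for $p$ odd, and divides the $k=1$ and odd-$k$ equations by $a_1$; the result is a polynomial system $F_k(\gamma_1; a_2,\tilde a_3,\dots,\tilde a_N)=0$ to which the implicit function theorem is applied \emph{once}, with $\gamma_1$ as the parameter and $(a_2,\tilde a_3,\dots,\tilde a_N)=0$ as the solution at $\gamma_1=\pi/4$. The Jacobian there is lower-triangular with nonzero diagonal, one computes $a_2'(\pi/4)=12/\pi$, and $a_1=\sqrt{a_2/\gamma_2}$ then yields the square-root branch (and automatically encodes supercriticality since $\gamma_2>0$ forces $a_2\ge0$). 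Your route --- solve the $k\ge2$ equations for $(a_2,\dots,a_N)=\Phi(a_1,\gamma_1)$ by IFT using $\lambda_k<0$ for $k\ge2$, substitute into the $k=1$ equation, divide by $a_1$, and solve the scalar bifurcation relation $\lambda_1(\gamma_1)=a_1^2H(a_1,\gamma_1)$ --- is the standard one and gives exactly the same leading coefficients because the only input at cubic order is $a_2=\gamma_2a_1^2$ and $\Gamma(3/2)=-2/(3\pi)$, precisely the quantities driving the paper's $a_2'(\pi/4)=12/\pi$. Your observation that the half-period translation $x\mapsto x+\pi$ gives a $\mathbb{Z}_2$-equivariance $a_k\mapsto(-1)^ka_k$, which forces the reduced map to be odd, is not made explicit in the paper but is a clean structural explanation for the exponent $1/2$; the paper instead gets it from the explicit relation $a_2=\gamma_2 a_1^2\ge0$.

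For (iii), the paper works directly with the $N\times N$ Jacobian $\partial_{\mathbf{a}}Q(\gamma_1:\bar{\mathbf{a}}(\gamma_1))$, writes out its characteristic polynomial $p(\gamma_1,\lambda)$, substitutes $\gamma_1=\pi/4+\tau$, $\lambda=1+\mu$, and uses $\bar a_1=\bigoh(\tau^{1/2})$ together with $\bigl(\partial_\lambda p\bigr)^{-1}\partial_{\gamma_1}p=-8/\pi$ at the critical point to extract $\lambda(\gamma_1)=1-\frac8\pi(\gamma_1-\pi/4)+\bigoh((\gamma_1-\pi/4)^{3/2})$, after verifying that the remaining $N-1$ diagonal entries $\lambda_k+1$ ($k\ge2$) stay inside the unit disk. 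Your parameter-dependent center-manifold reduction (one-dimensional center direction, reduced flow $\dot a_1=\lambda_1a_1-\frac{4}{3\pi}\gamma_1\gamma_2 a_1^3+\cdots$, hence reduced eigenvalue $-2\lambda_1+\bigoh(\tau^{3/2})$) is exact because the tangent to the invariant manifold at the new fixed point is an invariant subspace of the full linearization, so the reduced eigenvalue really is the near-zero eigenvalue of $\partial_{\mathbf{a}}Q-I$, and the transverse eigenvalues stay close to $\lambda_k<0$. Both deliver $-8/\pi$ as the one-sided slope; the paper's computation via the characteristic polynomial is more elementary (no appeal to center-manifold theory), while yours makes the bifurcation structure and the sign manipulations (supercriticality of the cubic $\Rightarrow$ exchange of stability) transparent and automatically ties (ii) and (iii) to a single coefficient. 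You also correctly observe that the modes $k>N$ decouple as $\dot a_k=-a_k$ so the problem reduces to the finite block, a fact the paper states implicitly via Hypothesis~\ref{hyp:lisse}.

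One minor point: you state $\Phi_k=\bigoh(a_1^k)$; for the argument only $a_2=\gamma_2 a_1^2$ (exact, since $\Gamma$ vanishes at nonzero integers) and $a_n a_{n-1}=\bigoh(a_1^5)$ for $n\ge3$ are needed, which holds because each $a_n$ for $n\ge3$ is $\bigoh(a_1^3)$ by the IFT and the quadratic structure; the full $\bigoh(a_1^k)$ claim is true by a short induction but is not required.
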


\noindent
Before proving this theorem, we give a few comments. 
One is tempted to think that the same result would hold for any noise
distribution, at least provided its Fourier coefficients decay
sufficiently fast, but to prove that rigorously requires an additional
estimate showing that $\gamma_{max}$ does not converge to $\pi/4$
when the number of coefficients increases. 

We illustrate the theorem by showing numerical calculations using the family of 
noise distributions obtained as a convex combination of a Fejér kernel
and of the uniform distribution. 
\begin{align*}
  g_{\lambda}(x) =& (1-\lambda) + \lambda \frac{1}{N}\left(\frac{\sin(N x/2)
    }{x/2}\right)^2 \,.
\end{align*}
For such a noise distribution, we have  $\gamma_k= \lambda (N-k)/N$
for $1\le k <N$. Therefore, this family can be put in the framework of
Hypothesis \ref{hyp:lisse} if we link $\lambda$ to $\gamma_1$ by
$\lambda = \frac{N}{N-1} \gamma_1$. In the numerical simulations, we
use $N=9$. Fig.~\ref{fig:101} shows 
the Fourier coefficient $a_1$ as a function of the parameter
$\gamma_1$. This figure exhibits a typical pitchfork bifurcation
pattern. The order parameter $a_1$ is identically zero as long as
$\gamma_1$ is less than the critical value $\gamma_{1c} =\pi/4$ and
the associated uniform equilibrium is stable. When $\gamma_1$ becomes
larger than the critical value $\gamma_{1c}$ a second branch of
non-uniform equilibria starts. This branch is stable while the branch
of uniform equilibria becomes  unstable. In fact the non-uniform
equilibria forms a continuum, because the system is rotationally
invariant, and therefore, if $f$ is a  non-isotropic equilibrium, then any
$f(e^{i \theta_0} x)$ with $\theta_0 \in ]0,2\pi[$ is another
equilibrium. This feature is represented by the lower branch in the
diagram. In physical terms, the system exhibits a symmetry-breaking
second-order phase transition as $\gamma_1$ crosses
$\gamma_{1c}$. From the point (ii) of the theorem, it appears that the
critical exponent is $1/2$, {\em i.e.} the order parameter behaves like $a_1
\sim (\gamma_1 - \gamma_{1c})^{1/2}$ when $\gamma_1
\stackrel{\geq}{\to} \gamma_{1c}$. Fig.~\ref{fig:8} shows the noise
function $g$ and the corresponding stationary solution $f$ when
$\gamma_1= \pi/4+ 0.1$. 

\begin{figure}[h!]
  \centering
\includegraphics[width=0.7\textwidth]{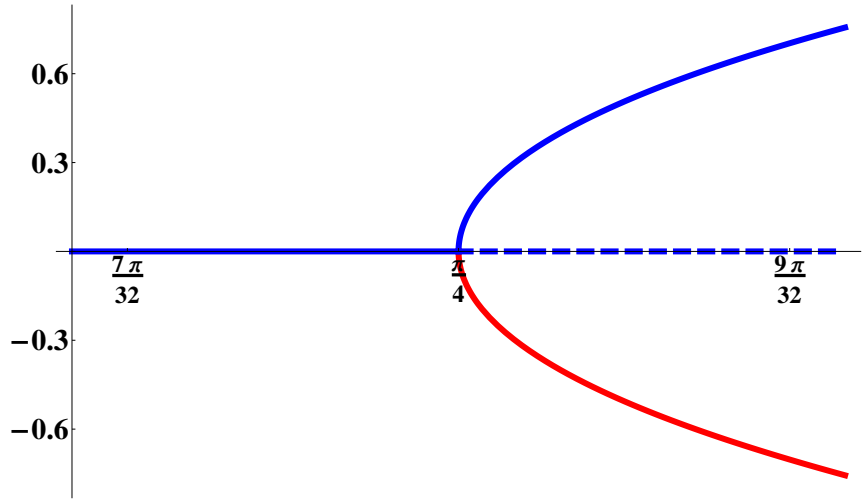}  
  \caption{The stationary solution $\bar{a}_1$ plotted as a function
    of $\gamma_1$. The noise function is a parameterized Fejér
  kernel of order 9.}  
  \label{fig:101}
\end{figure}

\begin{figure}[h!]
  \centering
\includegraphics[width=0.7\textwidth]{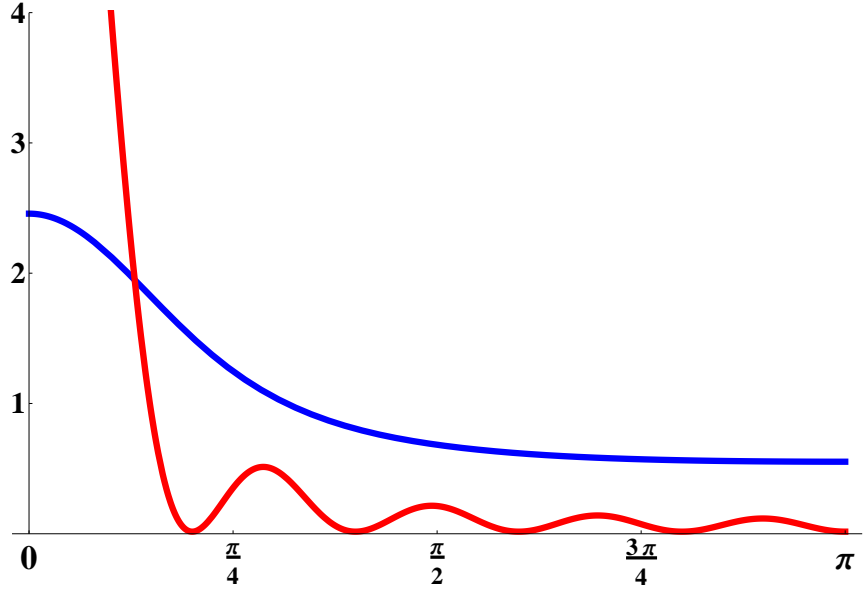}  
  \caption{The parameterized Fejér kernel of order 9 with   $\gamma_1=\pi/4+0.1$
    (red), and the corresponding solution $f(x)$ (blue)}
  \label{fig:8}
\end{figure}

\noindent
{\bf Proof of Theorem \ref{thm:stab_nonuniform}.}
The first statement, (i), is an immediate consequence of the analysis
of the linearized system in Section~\ref{sec:linearized}.

 To prove (ii) and (iii) we first note that in the Maxwellian case,
 $\Gamma(n-k/2) = 0$ when $k$ is even and different from
 $2n$. Therefore, if $k\ne0$ is even, there is only one 
non-zero term in the right hand side (\ref{eq:coscoeffeqMaxstat}) and we get: 
\begin{align*}
  a_k &= \gamma_k {a_{k/2}}^{2} \,, \quad \forall k \not = 0, \quad k \mbox{ even}. 
\end{align*}
We now concentrate on the case of $k$ odd. First, after a minor
reformulation,  
\begin{align*}
 \nonumber  a_1&=  2 \gamma_1\Gamma(1/2) a_1 + 2 \gamma_1 \Gamma(3/2) a_2 a_{1} + 
2 \gamma_1 \sum_{n=3}^{N}   \Gamma(n-1/2) a_n a_{n-1}\,,\\
\nonumber a_3 &=  2 \gamma_3\Gamma(3/2) a_3 + 2 \gamma_3\Gamma(1/2) a_2 a_1 +
   2\gamma_3 \Gamma(5/2) a_4 a_1 + 2\gamma_3 \sum_{n=5}^{N}\Gamma(n-3/2)
   a_n a_{n-3}  \\
&\vdots\\
\nonumber a_k &= 2\gamma_k\Gamma(k/2) a_k +  2\gamma_k \Gamma(1-k/2) a_{k-1} a_1
+ 2\gamma_k \sum_{n=2}^{(k-1)/2}\Gamma(n-k/2) a_n a_{k-n} + \\
\nonumber & \qquad +2\gamma_k \Gamma(1+k/2) a_{k+1} a_1 +
 2\gamma_k \sum_{n=k+2}^{N}\Gamma(n-k/2) a_n a_{n-k}
\end{align*}
Because $k$ is odd, either $n$ or $n-k$ is even. So all terms contain
a factor of the form $a_p a_q$, where $p$ is odd and $q\ge2$ is
even. Above we have separated all terms that contain a factor $a_1$.
We write $q$ in factorized form as
\begin{align*}
  q = u(q) 2^{m(q)}\equiv 2 \omega(q) \eta(q)\,
\end{align*}
with $\omega(q)$ containing all odd factors of $q$. With this notation,
\begin{align}
\label{eq:akeven}
\nonumber   a_q &= \gamma_q a_{\omega(q)2^{m(q)-1}}^2 = \gamma_q \gamma_{\omega(q)
    2^{m(q)-1}}^2 a_{\omega(q)2^{m(q)-2}}^{2^2}=...\\
&= \gamma_q
\prod_{j=1}^{m(q)-1}\gamma_{\omega(q)2^{m(q)-j}}^{2^j}a_{\omega(q)}^{2^{m(q)}}
  \equiv \tilde{\gamma}_q a_{\omega(q)}^{2\eta(q)}\,.
\end{align}
If $a_1\ne0$, we may write $a_p=a_1 \tilde{a}_p$ for all $p$ odd (this
obviously holds also for $p=1$, with $\tilde{a}_1=1$), and then 
\begin{align}
\label{eq:035}
  \frac{a_q a_p}{a_1} &=
  \tilde{\gamma_q}\gamma_2^{-\eta(q)}a_2^{\eta(q)}\tilde{a}_{\omega(q)}^{2\eta(q)}
  \tilde{a}_p   \,. 
\end{align}
Inserting these expressions in the equation for $a_1$ we get, after
dividing through by $a_1$, and using $\Gamma(x)=\frac{\sin(\pi x)}{\pi
  x}$, 
\begin{align*}
  0 &= \left(\frac{4}{\pi}\gamma_1-1\right) -\frac{4}{3\pi} \gamma_1
  a_2 +\gamma_1 R_2\equiv F_2(\gamma_1, a_2,\tilde{a}_3,\tilde{a}_5,...)\,,
\end{align*}
where $R_2$ is a sum of terms of the form~(\ref{eq:035}) with $p\ge 3$
and $q\ge 2$, {\em i.e.} monomials in $a_2$ and $\tilde{a}_p,
p=3,5,7...$ of degree at least two. Similarly the equation for $a_3$
becomes
\begin{align*}
  0 &=  \frac{4}{\pi}\gamma_3
  a_2-\left(\frac{4}{3\pi}\gamma_3+1\right)\tilde{a}_3 +\gamma_3
  R_3  \equiv F_3(\gamma_1, a_2,\tilde{a}_3,\tilde{a}_5,...)\,,
\end{align*}
where again $R_3$ is a sum of monomials of order at least two. And the
remaining equations are of the form
\begin{align*}
  0 &= \left( 2\Gamma(k/2)\gamma_k-1\right) \tilde{a}_k+\gamma_k
  R_k\equiv F_k(\gamma_1,a_2,\tilde{a}_3,\tilde{a}_5,...)\,,
\end{align*}
with $R_k$ as before. We have replaced all $\gamma_k$ by $\gamma_1$
owing to the parametrization of $\gamma_k$ by $\gamma_1$. As written
here, the functions $F_k$ depend only 
on one coefficient, $\gamma_1$. Here we also note that $\gamma_k=0$ implies that
$\tilde{a}_k=0$, and hence restricting the analysis to noise functions
with only finitely many non-zero coefficients, the system of equations
$(F_k=0)_{k=2,3,5,...}$ is reduced to a system of polynomial
equations for the unknowns $(a_2, \tilde a_3, \ldots, \tilde a_N)$,
with a right-hand side being a function of $\gamma_1$.  

We observe that at the critical value of the parameter, $\gamma_1 =
\pi/4$, the right-hand side as a function of $\gamma_1$
vanishes. Hence, the polynomial system has no degree zero term and is
solved by 
$a_2=\tilde{a}_3=\tilde{a}_3=...=\tilde{a}_n=0$. The implicit function
theorem then implies that for a sufficiently small interval around
 $\gamma_1 = \pi/4$, there is a solution $a_2(\gamma_1),
 \tilde{a}_3(\gamma_1), ...,  \tilde{a}_3(\gamma_1)$ if the Jacobian
 \begin{align*}
J&=  \left(
     \begin{array}{ccccc}
\frac{\partial F_2}{\partial a_2} & \frac{\partial F_2}{\partial \tilde{a}_3}
& ... & \frac{\partial F_2}{\partial \tilde{a}_N} \\ 
\\
\frac{\partial F_3}{\partial a_2} & \frac{\partial F_3}{\partial \tilde{a}_3}
& ... & \frac{\partial F_3}{\partial \tilde{a}_N}  \\
\vdots & \vdots &  & \vdots
\\
\frac{\partial F_N}{\partial a_2} & \frac{\partial F_N}{\partial \tilde{a}_3}
& ... & \frac{\partial F_N}{\partial \tilde{a}_N}       
     \end{array}
  \right) \\
\\
&=
 \left(
     \begin{array}{ccccc}
-\frac{4\gamma_1}{3\pi} + \gamma_1 \frac{\partial R_2}{\partial a_2}&
\gamma_1\frac{\partial R_2}{\partial \tilde{a}_3} 
& ... & \gamma_1 \frac{\partial R_2}{\partial \tilde{a}_N} \\ 
\\
\frac{4\gamma_3}{\pi}+\gamma_3\frac{\partial R_3}{\partial a_2} &
-\left(\frac{4\gamma_3}{3\pi}+1\right) + \gamma_3 \frac{\partial R_3}{\partial \tilde{a}_3}
& ... & \gamma_3 \frac{\partial R_3}{\partial \tilde{a}_N}  \\
\vdots & \vdots &  & \vdots
\\
\gamma_N\frac{\partial R_N}{\partial a_2} & \gamma_N\frac{\partial R_N}{\partial \tilde{a}_3}
& ... & 2(\gamma_N \sinc(\pi N/2)-1) +\gamma_N\frac{\partial R_N}{\partial \tilde{a}_N}       
     \end{array}
  \right)
 \end{align*}
is invertible at $\gamma_1=\frac{\pi}{4},
a_2=\tilde{a}_3=...=\tilde{a}_N=0$. Because all the $R_k$ are
polynomials of degree greater than two, we find that at the critical
point
\begin{align*}
  J&= 
 \left(\begin{array}{ccccc}
-\frac{1}{3}& 0  & ... &  0\\ 
\\
\frac{4\gamma_3}{\pi}& -\left(\frac{4\gamma_3}{3\pi}+1\right)  & ... & 0 \\
\vdots & \vdots &  & \vdots \\
0 & 0 & ... & 2(\gamma_N \sinc(\pi N/2)-1)\end{array}
  \right)
\end{align*}
Moreover, since, as seen before, the $R_k$'s are sums of monomials in
$(a_2, \tilde a_3, \ldots, \tilde a_N)$ of degree at least two, and
thanks to the assumption that $\gamma_k(\gamma_1)$ is $C^1$, we have: 
\begin{align*}
  \left(\frac{\partial F_2}{\partial \gamma_1}\right)_{\gamma_1=\frac{\pi}{4},
a_2=\tilde{a}_3=...=\tilde{a}_N=0 }&= \frac{4}{\pi}\\
\left( \frac{\partial F_k}{\partial \gamma_1}\right)_{\gamma_1=\frac{\pi}{4},
a_2=\tilde{a}_3=...=\tilde{a}_N=0 } &= 0\qquad\qquad k=3,5,...,N\,.
\end{align*}
The implicit function theorem then implies that sufficiently near
$\gamma=\pi/4$, the polynomial system can be solved, and that the
solutions $a_2,\tilde{a}_3, \tilde{a}_5,...,
\tilde{a}_N$ are differentiable functions of $\gamma_1$, with 
\begin{align*}
 \frac{d}{d\gamma_1} \left(
   \begin{array}{c}
     a_2\\ \tilde{a}_3\\ \vdots \\ \tilde{a}_N
   \end{array}\right) &= J^{-1}  \frac{d}{d \gamma_1} \left(\begin{array}{c}
     F_2\\ F_3\\ \vdots \\ F_N
   \end{array}\right)\,,
\end{align*}
where all derivatives in the right hand side are to be evaluated at
the critical point. Computing the inverse of the Jacobian, we find
easily that $a_2'(\pi/4)=12/\pi$, and with a little more effort
that $\tilde{a}_3'(\pi/4)= \frac{144 \gamma _3}{4 \pi  \gamma _3+3 \pi
    ^2}$, and then that $\tilde{a}_k'(\pi(4)=0$ for $k>3$. Hence
\begin{align*}
  a_2(\gamma_1) &= \frac{12}{\pi}\left(\gamma_1-\frac{\pi}{4}\right) +
  \bigoh\left(\left(\gamma_1-\frac{\pi}{4}\right)^2\right)\,,\\ 
  \tilde{a}_3(\gamma_1) &= \frac{144 \gamma _3}{4 \pi  \gamma _3+3 \pi
    ^2} \left( \gamma_1-\frac{\pi}{4}\right) +
  \bigoh\left(\left(\gamma_1-\frac{\pi}{4}\right)^2\right) \,,\\
 \tilde{a}_k(\gamma_1)
 &=\bigoh\left(\left(\gamma_1-\frac{\pi}{4}\right)^2\right)\,,
 \qquad\qquad k=5,7,9,...
\end{align*}


The Fourier coefficients $a_1,....,a_N$ of a stationary solution may now be
computed directly from
$a_2(\gamma_1),\tilde{a}_3(\gamma_1),...,\tilde{a}_N(\gamma_1)$ using
$a_p= a_1\tilde{a}_p$ and Eq.~(\ref{eq:akeven}).  
Because $a_2=\gamma_2 a_1^2$ and $\gamma_2>0$, and because we expect all coefficients
$a_1,...,a_N$ to be real, only $\gamma_1\ge\pi/4$ yields an admissible
solution. All coefficients are continuous functions of $\gamma_1$, and
therefore when $\gamma_1-\pi/4$ is sufficiently small, the Fourier
cosine series with these coefficients is non-negative.
Interestingly the behavior of $a_2$ near the critical point
is completely independent of the other coefficients of the noise
function than $\gamma_2$.

The uniform distribution, with $a_k=0, \; k=1,2,3....$ is always a
stationary solution, and the linearized analysis from Section~\ref{sec:linearized} showed that this solution is stable for $\gamma_1< \pi/4$ and
unstable for $\gamma_1>\pi/4$. The analysis in this section shows that
in an interval $\pi/4 < \gamma_1 < \gamma_{max}$ there is a new
invariant solution defined by the coefficients
$\bar{a}_1(\gamma_1),....,\bar{a}_N(\gamma_1)$ defined as above. 
 It now remains to prove that this new
solution is linearly stable. 
Setting $\mathbf{a}(t)=(a_1(t),
a_2(t),...,a_N(t))^{tr}$, we may write
 Eq. (\ref{eq:coscoeffeq}) as
\begin{align*}
  \frac{d}{dt}\mathbf{a}(t) =& Q(\gamma_1: \mathbf{a}(t)) - \mathbf{a}(t)\,,
\end{align*}
where $Q(\gamma_1: \mathbf{a})$ is a vector whose $k$-th element is
given by the right hand side of Eq.~(\ref{eq:coscoeffeqMaxstat}). To
prove linear stability of the 
stationary distributions $\bar{\mathbf{a}}(\gamma_1)$ computed from
above amounts 
to proving that the eigenvalues of the Jacobian matrix
$$   \frac{\partial}{\partial \mathbf{a}} Q(\gamma_1:
\bar{\mathbf{a}}(\gamma_1)) = \left( 
     \frac{\partial}{\partial a_j} Q_k(\gamma_1:
     \bar{\mathbf{a}}(\gamma_1))  \right)_{j,k=1}^N  
$$all lie inside the unit circle. The characteristic polynomial is
\begin{align*}
  p(\gamma_1,\lambda) = \det\left( \frac{\partial}{\partial
      \mathbf{a}} Q(\gamma_1: \bar{\mathbf{a}}(\gamma_1))-\lambda
    I\right)\,. 
\end{align*}
At $\gamma_1=\pi/4, a_1=...=a_N=0$,  $\frac{\partial}{\partial
  \mathbf{a}} Q(\gamma_1: \bar{\mathbf{a}}(\gamma_1))$ is a 
diagonal matrix whose diagonal entries are the coefficients $\lambda_k
+1$ as determined by (\ref{eq:stab}). They are explicitly given here
by: 
\begin{align*}
  1,\quad 0, \quad -\frac{4}{3 \pi/2}\gamma_3,\quad 0, \quad
  \frac{4}{5\pi/2} \gamma_5 ,..... 
\end{align*}
They all lie inside the unit circle except the first one. They are
continuous functions of $\gamma_1$ Therefore, as $\gamma_1$ is moved
around the critical value $\pi/4$ by a small amount, they all stay
within the unit circle, except may be the first one, which are are
going to study now. We note that, $\lambda=1$ is a simple eigenvalue
at this point: 
\begin{align*}
  p(\frac{\pi}{4}, 1) &= 0\,.
\end{align*}
We will now again use the implicit function theorem to show that
there is a function $\lambda(\gamma_1)$ such that $\lambda(\pi/4)=1$,
$ p(\gamma_1,\lambda(\gamma_1)) = 0$,  and  
\begin{align}
\label{eq:detderivative1}
  \lambda'(\frac{\pi}{4})&= - \left(\frac{\partial
      p(\gamma_1,\lambda)}{\partial
      \lambda}\right)^{-1}_{\gamma_1=\frac{\pi}{4}, \lambda=1}\left(\frac{\partial
      p(\gamma_1,\lambda)}{\partial \gamma_1}
  \right)_{\gamma_1=\frac{\pi}{4}, \lambda=1}  = -\frac{8}{\pi}
\end{align}
This implies that for
$\gamma_1>\pi/4$, sufficiently small, $|\lambda(\gamma_1)| < 1$,
and that $\bar{\mathbf{a}}(\gamma_1)$ is a stable (hyperbolic) fixed
point for the system in Eq.~(\ref{eq:coscoeffeq}) in the Maxwellian
case and with $N$ non-zero noise coefficients $\gamma_k$.

To obtain (\ref{eq:detderivative1}) we write
$\frac{\partial}{\partial \mathbf{a}} Q(\gamma_1: \bar{\mathbf{a}}(\gamma_1))-\lambda I$ in more detail. Explicitly
for 5 non-zero coefficients $\gamma_k$, this matrix is equal to:
\begin{align*}
\left(
\begin{array}{ccccc}
 -\frac{\displaystyle 4 a_2 \gamma_1}{\displaystyle 3 \pi }+
\frac{\displaystyle 4 \gamma_1}{\displaystyle \pi }-\lambda  & 2 
 \left(\frac{\displaystyle 2 a_3}{\displaystyle 5 \pi}-
   \frac{\displaystyle 2 a_1}{\displaystyle 3 \pi }\right) \gamma_1
 & 2 \left(\frac{\displaystyle 2 a_2}{\displaystyle 5 \pi }-
    \frac{\displaystyle 2 a_4}{\displaystyle 7 \pi }\right) 
   \gamma_1 & 2 \left(\frac{\displaystyle 2 a_5}{\displaystyle 9
       \pi}-
    \frac{\displaystyle 2 a_3}{\displaystyle 7 \pi }\right) \gamma_1 & \frac{\displaystyle 4 a_4 \gamma_1}{\displaystyle 9 \pi } \\  \\
 2 a_1 \gamma_2 & -\lambda  & 0 & 0 & 0 \\ \\
 \frac{\displaystyle 4 a_2 \gamma_3}{\displaystyle \pi }+
  \frac{\displaystyle 4 a_4 \gamma_3}{\displaystyle 5 \pi } & 
  \frac{\displaystyle 4 a_1 \gamma_3}{\displaystyle \pi }-
   \frac{\displaystyle 4 a_5 \gamma_3}{\displaystyle 7 \pi } & 
    -\frac{\displaystyle 4 \gamma_3}{\displaystyle 3 \pi }-\lambda  &
     \frac{\displaystyle 4 a_1 \gamma_3}{\displaystyle 5 \pi } & 
   -\frac{\displaystyle 4 a_2 \gamma_3}{\displaystyle 7 \pi } \\ \\
 0 & 2 a_2 \gamma_4 & 0 & -\lambda  & 0 \\ \\
 -\frac{\displaystyle 4 a_4 \gamma_5}{\displaystyle 3 \pi } & 
   \frac{\displaystyle 4 a_3 \gamma_5}{\displaystyle \pi } &
 \frac{\displaystyle 4 a_2 \gamma_5}{\displaystyle \pi } & 
   -\frac{\displaystyle 4 a_1 \gamma_5}{\displaystyle 3 \pi } & 
   \frac{\displaystyle 4 \gamma_5}{\displaystyle 5 \pi }-\lambda
\end{array}
\right)
\end{align*}
Substituting $\gamma_1$ with $\pi/4+\tau$ and $\lambda$ with
$1+\mu$ we find, retaining only the lowest order terms in each
coefficient and only coefficients of order one or less in $\tau$
and $\mu$, 
\begin{align*}
  \left(
\begin{array}{ccccc}
 -\mu & -\frac{\displaystyle 4 a_1 \tau}{\displaystyle 3
   \pi }-\frac{\displaystyle a_1}{\displaystyle 3} & 
 \frac{\displaystyle 12 \tau}{\displaystyle 5 \pi } & 0 & 0 \\ \\
 2 a_1 \bar{\gamma}_2 & -1 & 0 & 0 & 0 \\ \\
 \frac{\displaystyle 48 \tau \bar{\gamma}_3}{\displaystyle \pi ^2} &
 \frac{\displaystyle 4 a_1 \bar{\gamma}_3}{\displaystyle \pi } 
 & -\frac{\displaystyle 4 \bar{\gamma}_3}{\displaystyle 3 \pi }-1 
   & \frac{\displaystyle 4 a_1 \bar{\gamma}_3}{\displaystyle 5 \pi } &
   -\frac{\displaystyle 48 \tau \bar{\gamma}_3}{\displaystyle 7 \pi
     ^2} \\ \\
 0 & \frac{\displaystyle 24 \tau \bar{\gamma}_4}{\displaystyle \pi } &
 0 & -1 & 0 \\ \\
 0 & 0 & \frac{\displaystyle 48 \tau \bar{\gamma}_5}{\displaystyle \pi
   ^2} & -\frac{\displaystyle 4 a_1\bar{\gamma}_5}{\displaystyle 3 \pi } & \frac{\displaystyle 4
   \bar{\gamma}_5}{\displaystyle 5 \pi }-1 \,.  
\end{array}
\right)
\end{align*}
In this expression $\bar{\gamma}_k = \gamma_k(\pi/4)$. It is easy to see that this matrix has essentially the same form for
any number of non-zero coefficients $\gamma_k$, a five-diagonal matrix
where the diagonal elements except the first one are of order $\bigoh(1)$ and
all other elements are $\bigoh( \mu+\tau^{1/2})$ (because $\bar a_1 \sim \bar a_2^{1/2} = \bigoh(\tau^{1/2})$). Hence,
expanding the determinant, we find, after some computation, that
\begin{align*}
  p(\frac{\pi}{4}+\tau, 1+\mu) &= C_N
  (\mu+\frac{2}{3}\bar{\gamma}_2 a_1^2) + \bigoh(
  \mu^2+\tau^{3/2})\\
&= C_N
  (\mu+\frac{2}{3}a_2) + \bigoh(
  \mu^2+\tau^{3/2})\\
&= C_N
  (\mu+\frac{8}{\pi}\tau) + \bigoh(
  \mu^2+\tau^{3/2})\,
\end{align*}
where $C_N$ is the product of the diagonal elements from row three and below.
And we conclude, as stated in eq.~(\ref{eq:detderivative1}) that 
\begin{align*}
 -  \frac{\partial p}{\partial \gamma_1} /  \frac{\partial p}{\partial
    \lambda} = - 8 / \pi\,,
\end{align*}
when evaluated at the critical point $\gamma_1=\pi/4, \lambda=1$. Again
we note that this is independent of the Fourier coefficients of the
noise function.

\section{The method of partitions of integers by Ben-Naim and Krapivsky}
\label{sec:BenNaimKrapivsky}

In this section we adapt a method of  Ben-Naim and 
Krapivsky~\cite{BenNaimKrapivsky2006} to the construction of invariant densities
for our equation in the Maxwellian case.  We no longer require Hypothesis~\ref{hyp}, but on the other hand, we 
shall not control the convergence of infinite sums, and our conclusions are therefore formal. Nonetheless,
as in  \cite{BenNaimKrapivsky2006}, the method provides another view of the phase transition studied here.

With $\gamma_k$ defined as above and  $\Gamma(u)=\sin(\pi
u)/(\pi u)$, we let
\begin{eqnarray*}
  G_{i,j} &=& \frac{\gamma_{i+j}}{1-2 \gamma_{i+j}\Gamma\left(\frac{i+j}{2}\right)}\Gamma\left(\frac{i-j}{2}\right)\,,
\end{eqnarray*}
which is defined for $i,j\in\Z$. Clearly
\begin{eqnarray*}
  G_{i,j}=G_{j,i},\quad  G_{i,j}=G_{-i,-j},\qquad
\mbox{and}\qquad G_{j,j}=\gamma_{2j}\,.
\end{eqnarray*}
Also
\begin{equation}
  \label{eq:c1.6}
  G_{i,j} = 0\quad\mbox{when} \quad(i-j)\ne0\quad\mbox{is even}\,,
\end{equation}
whereas for $j-i$ odd, $G_{i,j}$ satisfies
$$  |G_{i,j}| \le
  \frac{|\gamma_{i+j}|}{1-4|\gamma_{i+j}|/(\pi|i+j|)}\frac{2}{\pi|i-j|} \,. $$

Because we only look for even solutions,
$a_j=a_{-j}$, equation~(\ref{eq:coscoeffeq}) may now  be written
\begin{eqnarray}
\label{eq:eq34}
  a_k&=& \sum_{j=1}^{k-1} G_{k-j,j} a_{k-j}a_j + 2\sum_{j=1}^{\infty}
  G_{k+j,-j} a_{k+j} a_j\,.
\end{eqnarray}
It follows from~(\ref{eq:c1.6}) and (\ref{eq:eq34}) that when $k$ is a power of two, one can express $a_k$ in terms
of $a_1=a_{-1}$. Hence
with $k=2^m$, 
\begin{eqnarray*}
  a_{2^m}&=& \gamma_{2^m} \left(a_{2^{m-1}}\right)^2\,,
\end{eqnarray*}
and iterating gives
\begin{equation}
\label{eq:c1.9}
  a_{2^m}= \prod_{j=0}^{m-1} \left(\gamma_{2^{m-j}}\right)^{2^j} a_{1}^{2^m}\,.
\end{equation}

One might hope that it is possible to express {\em every} $a_k$ as, if not a polynomial in $a_1$, at least
as a power series in $a_1$. 
The strategy in ~\cite{BenNaimKrapivsky2006} provides such an expression, and $a_1$
itself is considered an
{\em order parameter} and denoted $R$: for $k\ge 2$,
\begin{eqnarray}
\label{eq:c1.11}
  a_k &=& \sum_{n=0}^{\infty} p_{k,n} R^{|k|+2n}\,,
\end{eqnarray}
where the the coefficients $p_{k,n}$ are a sum of various products of
$G_{i,j}$ computed using a generalized integer partition of $k$ as a
sum of $k+n$ terms of $+1$ and $n$ terms of $-1$. The formula
corresponding to~(\ref{eq:c1.11}) in~\cite{BenNaimKrapivsky2006} is
written with $k$ instead of $|k|$ in the exponent of $R$, and this
leads to the erroneous formula (15) in their paper. We will now
derive a correct replacement of their formula (15) adapted to our
case.

\subsection{The recursion formula}

Here we look for an invariant density $f$ whose Fourier
coefficients, $a_k$ ($k\ge2$) are given by a power series in $R$ of
the form~(\ref{eq:c1.11}), using, of course, $k=|k|$. For $a_1$, there
is such a representation,
\begin{equation}
\label{eq:c1.12}
  p_{1,n} = \delta_{n,0} = \left\{
      \begin{array}{ll}
        1\quad\mbox{if  }&n=0\\
        0      \quad\mbox{if  }          &n\ne0
      \end{array}
 \right.
\end{equation}
but we will also use a different representation in which
$p_{1,0}=0$. Combining the two expressions gives the equation
\begin{equation*}
  R=\sum_{n=0}^{\infty} p_{1,n} R^{1+2n}\,,
\end{equation*}
from which the value of $R$ can be determined. Clearly, $R=0$ is a
solution, corresponding to the uniform distribution $f=(2\pi)^{-1}$.

\begin{lemma}
  \label{lem:p_relation}
  For each positive integer $k$, let $\{p_{k,n}\}$ be a sequence of
  numbers such that the power series
  $\sum_{n=0}^{\infty}p_{k,n}z^{k+2n}$ has radius of convergence at
    least one. For $-1<R<1$, define 
    \begin{equation*}
      a_{-k}(R)=a_k(R) = \sum_{n=0}^{\infty} p_{k,n} R^{k+2n}\,.
    \end{equation*}
  Then the $a_k(R)$ satisfy~(\ref{eq:eq34}) for all $R$ and all $k\ge1$ if and only if the
  numbers $\{p_{k,n}\}$ for $k\ge 1$ and $n\ge0$ satisfy
  \begin{equation}
   \label{eq:c1.13}
    p_{k,n}= \sum_{j=1}^{k-1}\sum_{\ell=0}^{n}
    G_{k-j,j}p_{k-j,\ell}p_{j,n-\ell} + 2 \sum_{j=1}^{n}
    \sum_{\ell=0}^{n-j} G_{k+j,-j} p_{k+j,\ell} p_{j,n-(j+\ell)}\,.
  \end{equation}
Note that for $n=0$  the second sum is zero.
\end{lemma}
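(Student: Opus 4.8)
The plan is to prove Lemma~\ref{lem:p_relation} by substituting the ans\"atze $a_k(R)=\sum_{n\ge0}p_{k,n}R^{k+2n}$ directly into the recursion~(\ref{eq:eq34}) and then matching powers of $R$. Since the radius of convergence of each series is at least one, for $-1<R<1$ all manipulations (rearranging, multiplying, collecting) are justified by absolute convergence, so the resulting identity in $R$ holds for all such $R$ if and only if the coefficients of each power of $R$ agree. This reduces the "if and only if" to a purely formal bookkeeping identity.

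First I would expand the two sums on the right-hand side of~(\ref{eq:eq34}) as power series. For the first (finite) sum, $G_{k-j,j}a_{k-j}(R)a_j(R) = G_{k-j,j}\big(\sum_{\ell\ge0}p_{k-j,\ell}R^{k-j+2\ell}\big)\big(\sum_{m\ge0}p_{j,m}R^{j+2m}\big)$; the Cauchy product gives $G_{k-j,j}\sum_{n\ge0}R^{k+2n}\sum_{\ell=0}^{n}p_{k-j,\ell}p_{j,n-\ell}$, where I set $m=n-\ell$ so that the exponent $k-j+2\ell+j+2m = k+2n$. Summing over $j=1,\dots,k-1$ and reading off the coefficient of $R^{k+2n}$ yields exactly the first term of~(\ref{eq:c1.13}). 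For the second sum, $2\,G_{k+j,-j}a_{k+j}(R)a_j(R) = 2\,G_{k+j,-j}\big(\sum_{\ell\ge0}p_{k+j,\ell}R^{k+j+2\ell}\big)\big(\sum_{m\ge0}p_{j,m}R^{j+2m}\big)$, whose generic term has exponent $k+2j+2\ell+2m = k+2(j+\ell+m)$. Writing $n=j+\ell+m$, i.e. $m=n-(j+\ell)$ with $m\ge0$ forcing $\ell\le n-j$ and the whole term present only when $j\le n$, the coefficient of $R^{k+2n}$ is $2\sum_{j=1}^{n}\sum_{\ell=0}^{n-j}G_{k+j,-j}p_{k+j,\ell}p_{j,n-(j+\ell)}$, which is the second term of~(\ref{eq:c1.13}). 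Setting the coefficient of $R^{k+2n}$ on the left, which is simply $p_{k,n}$, equal to the sum of these two contributions gives~(\ref{eq:c1.13}), and conversely~(\ref{eq:c1.13}) reassembles into~(\ref{eq:eq34}).

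Two small points deserve care. One is the index shift in the exponents: I must check that $a_{k+j}$ contributes $R^{k+j+2\ell}$ (using $k+j\ge1$ so the formula from the statement applies with "$k$" replaced by $k+j$), and similarly that $a_{k-j}$ for $1\le j\le k-1$ uses the exponent $k-j+2\ell$ with $k-j\ge1$; this is why the first sum stops at $j=k-1$ and the second starts at $j=1$. The other is the remark that for $n=0$ the second sum is empty: indeed $j\ge1$ and $j\le n=0$ is impossible, consistent with the fact that at order $R^{k}$ only the "diagonal" quadratic combinations of the leading terms $p_{\cdot,0}$ can appear. I would note that since $a_k=a_{-k}$ is built into both the ansatz and~(\ref{eq:eq34}), and $G_{i,j}=G_{j,i}=G_{-i,-j}$, no separate treatment of negative indices is needed.

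I do not expect a serious obstacle here; the content of the lemma is entirely the combinatorial reorganization of a double Cauchy product, and the only thing to be vigilant about is keeping the dummy indices straight, in particular the substitution $m=n-(j+\ell)$ in the second sum and the constraint $\ell\le n-j$ it imposes. The mild subtlety worth stating explicitly is the justification for matching coefficients: this uses that a power series with positive radius of convergence is determined by its coefficients, together with the absolute convergence (from radius of convergence $\ge1$) that allows the rearrangement of the Cauchy products and the interchange of the finite sum over $j$ with the infinite sums. If one wanted to stay purely formal, one could instead work in the ring of formal power series in $R$, where the equivalence is immediate, but stating the convergence hypothesis as in the lemma lets us regard the $a_k(R)$ as genuine functions.
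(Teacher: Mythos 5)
Your proof is correct and takes essentially the same route as the paper: substitute the ansatz into equation~(\ref{eq:eq34}), form the Cauchy products, and equate coefficients of $R^{k+2n}$, with the index substitutions $m=n-\ell$ (first sum) and $m=n-(j+\ell)$ (second sum) producing exactly~(\ref{eq:c1.13}). You are slightly more careful than the paper in spelling out the index shifts, the constraint $\ell\le n-j$ and $j\le n$ in the second sum, and the justification for matching coefficients, but the argument is the same.
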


\begin{proof} Take $k\ge0$. 
  Substituting~(\ref{eq:c1.11}) into equation~(\ref{eq:eq34}) gives
  \begin{align}
    \label{eq:c1.15}
   \sum_{n=1}^{\infty} p_{k,n} R^{k+2n} =&
   \sum_{j=1}^{k-1}\sum_{\ell=0}^{\infty}\sum_{m=0}^{\infty}
   R^{k+2(\ell+m)} G_{k-j,j} \, p_{k-j,\ell}\, p_{j,m} \nonumber \\
    &+ 2  \sum_{j=1}^{\infty}\sum_{\ell=0}^{\infty}\sum_{m=0}^{\infty}
        R^{k+2(j+\ell+m)} G_{k+j,-j} \, p_{k+j,\ell}\, p_{j,m}  \,.
  \end{align}
  Equating coefficients of like powers of $R$, we
  obtain~(\ref{eq:c1.13}). Conversely, if~(\ref{eq:c1.13}) is
  satisfied for all $k\ge2$, then~(\ref{eq:c1.15}) is also satisfied
  for $k\ge2$. 
\end{proof}

As the proof of the lemma show, if we could find numbers $p_{k,n}$  such that 
(\ref{eq:c1.13}) is satisfied for all $k\geq 1$, then we would construct a family, parameterized
by $R$, of solutions (not necessarily positive) of the invariant measure equation. 

This, of course, is more than we expect to find, and so the lemma must be supplemented by
two things: (1) A construction  of the numbers $p_{k,n}$. (2) A mechanism for selecting a 
particular value of $R$.

Following ~\cite{BenNaimKrapivsky2006}, we present a recursive construction of the numbers
$p_{k,n}$, and a consistent argument for determining $R$.

\subsection{The recursion formula }

We need some known values of the $p_{k,n}$ to start the recursive construction. 
First, notice that when $k$ is a power of two, there is
only one non-zero term in the right-hand side of~(\ref{eq:c1.13}), and
a simple recursion gives 
\begin{equation}
\label{eq:c1.17}
  p_{2^m,n} = \prod_{j=0}^{m-1}\left(\gamma_{2^{m-j}}\right)^{2^j} \,\delta_{n,0}\,,
\end{equation}
which is consistent with~(\ref{eq:c1.9}). 

On the other hand,
equation~(\ref{eq:c1.13}) is inconsistent
with~(\ref{eq:c1.12}). Indeed, for $k=1$, the first 
sum in~(\ref{eq:c1.13}) is zero because the range of summation is
empty. Then for $n=0$ also the second sum is zero, so,
$p_{1,0}=0$. This can be seen already in~(\ref{eq:c1.15}), because
there, in the right hand side, the smallest power of $R$ that is
present is $R^{1+2(j-m-\ell)}$ with $j=1$ and  $m=\ell=0$, {\em i.e.}
$R^3$. However, the coefficient of $R^3$ is a multiple of $p_{1,0}$,
so $p_{1,1}=0$ as well. Hence the first non-vanishing coefficient for
$a_1$ is $p_{1,2}$.

This discrepancy is the source of the criterion for selecting a particular value of $R$
that yields an invariant density. 

To start the recursive determination of the coefficients, note that 
when $n=0$, 
the range in the second sum in (\ref{eq:c1.13}) is empty. Thus,
we have
 $$   p_{k,0}= \sum_{j=1}^{k-1}
    G_{k-j,j}p_{k-j,0}p_{j,0}\ . $$
 Since as noted above $p_{1,0} =1$ and $p_{2,0} = \gamma_2$, $p_{3,0}$
 is determined and then, recursively,  so is $p_{k,0}$ for all $k$.

Next, we consider  $p_{k,n}$ for  $k=1$. 
Specializing~(\ref{eq:c1.13}) to $k=1$, we obtain
$$  p_{1,n}\,=\,2\sum_{j=1}^{n}\sum_{\ell=0}^{n-j} G_{1+j,-j}\,p_{1+j,\ell}\,p_{n-(j+\ell)}\,. $$
The first two terms in this sequence are
\begin{equation*}
  p_{1,2}\,=\,2G_{3,-2}\,p_{3,0}\,p_{2,0}\,,
\end{equation*}
and
\begin{equation*}
  p_{1,3}\,=\,2\left(G_{2,-1} p_{2,0}\,p_{1,2} + G_{3,-2}
    p_{3,1}\,p_{2,0} \right)\,.
\end{equation*}
Here we have used $p_{1,0}=p_{1,1}=p_{2,1}=0$, the latter being true
because of~(\ref{eq:c1.17}), which reduces to $p_{2,n}=\gamma_2
\delta_{n,0}$ when $k=2$. 
All terms in the expression for $p_{1,2}$ have been determined above.
To compute $p_{1,3}$, we need $p_{3,1}$. However,
 $$   p_{3,1}= \sum_{j=1}^{2}\sum_{\ell=0}^{1}
    G_{k-j,j}p_{k-j,\ell}p_{j,1-\ell} + 2 
     G_{4,-1} p_{4,0} p_{1,0}\ .$$
Since $p_{4,0}$ is known, we have $p_{3,1}$ and hence $p_{1,3}$.   So far, we have determined the values of
all $p_{k,n}$ for all $k+n \leq 4$, and then some. 
From here it is not hard to see that  the values of all of the $p_{k,n}$ are determined.
For a discussion of this in terms of integer partitions, see ~\cite{BenNaimKrapivsky2006}.
Though all of the coefficients are determined, it does not seem to be a simple matter to
estimate the size of the coefficients in a manner that is useful for proving that they do define power series with even a positive radius of convergence.

\subsection{The consistency condition}

At this stage, we have the coefficients $p_{k,n}$ for all $k\geq 1$
and all $n\geq0$. The equations 
(\ref{eq:c1.13}) are satisfied for all $k\geq 1$, by construction, but
not, as we have pointed out, for 
$k=1$ by the coefficients given in (\ref{eq:c1.12}), which corresponds
to $a_1(R) = R$ for all $-1 < R < 1$.   

Nonetheless, assuming convergence, we have from (\ref{eq:c1.9}) that $R =a_1$. 
Using the coefficients derived above, we have
$$a_1(R) = \sum_{0}^\infty p_{1,n}R^{1+2n}\ ,$$
and the first non-vanishing term in the power series on the right is
for $n=2$, so that $a_1(R) \sim R^5$ at $R=0$. 

Therefore, any value of $R$ giving an invariant measure must satisfy
$$R = a_1(R) \ ,$$
where $a_1(R)$ is the function defined by the power series derived above. 
Of course, there is always the solution $R=0$. However, there may be
other solutions.  
In~\cite{BenNaimKrapivsky2006}, the function $a_1(R)$ is approximately
computed numerically  
and plotted. For noise parameters such that  $R = a_1(R)$ has a
non-zero solution, they 
find a non-trivial invariant measure. However, rigorous analysis of
this construction, and especially 
analysis of stability of the invariant measures so constructed, seems
difficult, and this has motivated our 
different treatment. While less general in its scope, due to
Hypothesis~\ref{hyp}, it does permit rigorous analysis.

\section{Conclusion}
\label{sec:conclu}

In this paper, we have studied a Boltzmann model intended to provide a binary interaction description of alignment dynamics which appears in swarming models such as the Vicsek model. In this model, pairs of particles lying on the circle interact by trying to reach their mid-point up to some noise. We have studied the equilibria of this Boltzmann model and, in the case where the noise probability has only a finite number of non-zero Fourier coefficients, rigorously shown the existence of a pitchfork bifurcation as a function of the noise intensity. In the case of an infinite number of non-zero Fourier modes, we have adapted a method proposed by Ben-Na\"im and Krapivsky to show (at least formally) that a similar behavior can be obtained. In the future, we expect to be able to show the rigorous convergence of the infinite series involved in the Ben-Na\"im and Krapivsky argument, and therefore, to give a solid mathematical ground also to this case. Extensions of the model to higher dimensional spheres or other manifolds is also envisionned. Finally, the non-isotropic equilibria found beyond the critical threshold will allow us to develop non-trivial Self-Organized Hydrodynamics, as done earlier in the case of the Vicsek mean-field dynamics.

\bibliographystyle{plain}


\begin{thebibliography}{1}

\bibitem{Aldana_etal_PRL07}
M. Aldana, V. Dossetti, C. Huepe, V. M. Kenkre and H. Larralde. 
\newblock Phase transitions in systems of self-propelled agents and related network models.
\newblock {\em Phys. Rev. Lett.}, 98:095702, (2007).

\bibitem{Aoki_BullJapSocSciFish92} I. Aoki. 
\newblock A simulation study on the schooling mechanism in fish. 
\newblock {\em Bulletin of the Japan Society of Scientific Fisheries}, 48:1081-1088, (1982).

\bibitem{Barbaro_Degond_DCDSB13}
A. Barbaro and P. Degond.
\newblock  Phase transition and diffusion among socially interacting self-propelled agents.
\newblock {\em  Discrete Contin. Dyn. Syst. Ser. B}, to appear. 

\bibitem{Baskaran_Marchetti_PRE08} 
A. Baskaran and M. C. Marchetti.
\newblock Hydrodynamics of self-propelled hard rods. 
\newblock {\em Phys. Rev. E}, 77:011920 (2008).

\bibitem{Baskaran_Marchetti_PRL10} 
A. Baskaran and M. C. Marchetti.
\newblock Nonequilibrium statistical mechanics of self-propelled hard rods. 
\newblock {\em J. Stat. Mech. Theory Exp.}, P04019, (2010). 

\bibitem{Bellomo_Soler_M3AS12}
N. Bellomo and J. Soler.
\newblock  On the mathematical theory of the dynamics of swarms viewed as complex systems.
\newblock {\em Math. Models Methods Appl. Sci.}, 22, Supp1:1140006, (2012).

\bibitem{BenNaimKrapivsky2006}
E.~Ben-Naim and P.~L. Krapivsky.
\newblock Alignment of rods and partition of integers.
\newblock {\em Phys. Rev. E}, 73(3):031109, (2006).

\bibitem{Bertin_etal_NewJPhys13}
E. Bertin, H. Chat\'e, F. Ginelli, S. Mishra, A. Peshkov and S. Ramaswamy.
\newblock  Mesoscopic theory for fluctuating active nematics.
\newblock {\em New J. Phys.}, 15:085032, (2013).

\bibitem{BertinDrozGregoire2006}
E.~Bertin, M.~Droz and G.~Gr{\'e}goire.
\newblock Boltzmann and hydrodynamic description for self-propelled particles.
\newblock {\em Phys. Rev. E}, 74:022101, (2006).

\bibitem{Bertin_etal_JPhysA09}
E.~Bertin, M.~Droz and G.~Gr{\'e}goire.
\newblock Hydrodynamic equations for self-propelled particles: microscopic derivation and stability analysis.
\newblock {\em J. Phys. A: Math. Theor.} 42:445001, (2009) . 

\bibitem{Bolley_etal_M3AS11} 
F. Bolley, J. A. Ca\~ nizo and J. A. Carrillo. 
\newblock Stochastic Mean-Field Limit: Non-Lipschitz Forces \& Swarming. 
\newblock {\em Math. Models Methods Appl. Sci.}, 21:2179-2210, (2011).

\bibitem{CarlenChatelinDegondWennberg2011}
E. Carlen, R. Chatelin, P. Degond and B. Wennberg.
\newblock Kinetic hierarchy and propagation of chaos in biological swarm models.
\newblock {\em Physica D, Nonlinear phenomena},  260:90-111, (2013).

\bibitem{CarlenDegondWennberg2011}
E. Carlen, P. Degond and B. Wennberg.
\newblock Kinetic limits for pair-interaction driven master equations and biological swarm models.
\newblock 	{\em Math. Models and Methods in Appl  Sci.}, 23(7):1339-1376, (2013).

\bibitem{Carrillo_etal_SIMA10} J. A. Carrillo, M. Fornasier, J. Rosado and G. Toscani. 
\newblock Asymptotic Flocking Dynamics for the kinetic Cucker-Smale model.
\newblock {\em SIAM J. Math. Anal.} 42:218-236, (2010).

\bibitem{Chate_etal_PRE08}
H. Chat\'e, F. Ginelli, G. Gr\'egoire and F. Raynaud. 
\newblock Collective motion of self-propelled particles interacting without cohesion.
\newblock {\em Phys. Rev. E}, 77:046113, (2008).

\bibitem{Chuang_etal_PhysicaD07} 
Y-L. Chuang, M. R. D'Orsogna, D. Marthaler,  A. L. Bertozzi and L. S. Chayes. 
\newblock State transitions and the continuum limit for a 2D interacting, self-propelled particle system. 
\newblock {\em Physica D}, 232:33-47, (2007). 


\bibitem{Cordier_etal_JSP05} S. Cordier, L. Pareschi and G. Toscani. 
\newblock On a kinetic model for a simple market economy. 
\newblock 	{\em J. Stat. Phys.}, 120:253-277, (2005) . 

\bibitem{Couzin_etal_JTB02} 
I. D. Couzin, J. Krause, R. James, G. D. Ruxton and N. R. Franks. 
\newblock Collective Memory and Spatial Sorting in Animal Groups.
\newblock {\em  J. theor. Biol.}, 218:1-11, (2002 . 

\bibitem{Cucker_Smale_IEEETransAutCont07}
F. Cucker and S. Smale. 
\newblock Emergent behavior in flocks.
\newblock {\em  IEEE Transactions on Automatic Control}, 52:852-862, (2007).

\bibitem{Czirok_etal_PRE96}
A. Czir\`ok, E. Ben-Jacob, I. Cohen and T. Vicsek. 
\newblock Formation of complex bacterial colonies via self-generated vortices.
\newblock {\em  Phys. Rev. E}, 54:1791-1801, (1996) .

\bibitem{Degond_etal_JNonlinearSci13}
P. Degond, A. Frouvelle and J-G. Liu. 
\newblock Macroscopic limits and phase transition in a system of self-propelled particles. 
\newblock {\em J. Nonlinear Sci.}, 23:427-456, (2013). 

\bibitem{Degond_etal_preprint13}
P. Degond, A. Frouvelle and J-G. Liu. 
\newblock Phase transitions, hysteresis, and hyperbolicity for self-organized alignment dynamics.
\newblock {\em submitted.}  arXiv:1304.2929.

\bibitem{Degond_etal_Schwartz13}
P. Degond, A. Frouvelle, J.-G. Liu, S. Motsch and L. Navoret. 
\newblock Macroscopic models of collective motion and self-organization.
\newblock {\em S\'eminaire Laurent Schwartz - EDP et applications} , 1, (2012-2013).

\bibitem{Degond_etal_arXiv:1403.5233}
P. Degond, A. Frouvelle, G. Raoul.
\newblock Local stability of perfect alignment for a spatially homogeneous kinetic model. 
\newblock {\em Submitted.} arXiv:1403.5233. 

\bibitem{Degond_etal_MAA13}
P. Degond, J-G. Liu, S. Motsch and V. Panferov. 
\newblock Hydrodynamic models of self-organized dynamics: derivation and existence theory.
\newblock {\em  Methods Appl. Anal.}, 20:089-114, (2013).

\bibitem{Degond_Motsch_M3AS08}
P. Degond and S. Motsch. 
\newblock Continuum limit of self-driven particles with orientation interaction.
\newblock {\em Math. Models Methods Appl. Sci.}, 18Suppl:1193-1215, (2008). 

\bibitem{Fornasier_etal_PhysicaD11}
M. Fornasier, J. Haskovec and G. Toscani. 
\newblock Fluid dynamic description of flocking via the Povzner-Boltzmann equation.
\newblock {\em Phys. D},  240:21-31, (2011). 

\bibitem{Frouvelle_M3AS12} 
A. Frouvelle. 
\newblock A continuum model for alignment of self-propelled particles with anisotropy and density-dependent parameters.
\newblock {\em  Math. Mod. Meth. Appl. Sci.}, 22:1250011, (2012).

\bibitem{Frouvelle_Liu_SIMA12}
A. Frouvelle and J.-G. Liu. 
\newblock Dynamics in a kinetic model of oriented particles with phase transition.
\newblock {\em SIAM J. Math. Anal.}, 44:791-826, (2012). 

\bibitem{Gautrais_etal_PlosCB12}
J. Gautrais, F. Ginelli, R. Fournier, S. Blanco, M. Soria, H. Chat\'e and G. Theraulaz. 
\newblock Deciphering interactions in moving animal groups.
\newblock {\em  Plos Comput. Biol.}, 8:e1002678, (2012).

\bibitem{Gretoire_Chate_PRL04} G. Gr\'egoire and  H. Chat\'e. 
\newblock Onset of collective and cohesive motion.
\newblock {\em  Phys. Rev. Lett.}, 92:025702, (2004).

\bibitem{Ha_Liu_CMS09}
S. -Y. Ha and J.-G. Liu. 
\newblock A simple proof of the Cucker-Smale flocking dynamics and mean-field limit.
\newblock {\em  Commun. Math. Sci.}, 7:297-325, (2009).

\bibitem{Ha_Tadmor_KRM08}
S.-Y. Ha and E. Tadmor. 
\newblock From particle to kinetic and hydrodynamic descriptions of flocking.
\newblock {\em  Kinet. Relat. Models}, 1:415-435, (2008).

\bibitem{Mogilner_etal_JMB03}
A. Mogilner, L. Edelstein-Keshet, L. Bent and A. Spiros. 
\newblock Mutual interactions, potentials, and individual distance in a social aggregation.
\newblock {\em J. Math. Biol.}, 47:353-389, (2003). 

\bibitem{Motsch_Tadmor_JSP11}
S. Motsch and E. Tadmor. 
\newblock A new model for self-organized dynamics and its flocking behavior.
\newblock {\em  J. Stat. Phys.}, 144:923-947, (2011).

\bibitem{Peruani_etal_PRE06}
F. Peruani, A. Deutsch and M. B\"ar. 
\newblock Nonequilibrium clustering of self-propelled rods.
\newblock {\em  Phys. Rev. E},  74:030904(R), (2006).

\bibitem{Ratushnaya_etal_PhysicaA07}
V. I. Ratushnaya, D. Bedeaux, V. L. Kulinskii and A. V. Zvelindovsky. 
\newblock Collective behavior of self propelling particles with kinematic constraints: the relations between the discrete and the continuous description.
\newblock {\em Phys. A}, 381:39-46, (2007).

\bibitem{Toner_Tu_PRL95} J. Toner and Y. Tu. 
\newblock Flocks, Long-range order in a two-dimensional dynamical XY model: how birds fly together.
\newblock {\em  Phys. Rev. Lett.}, 75:4326-4329 (1995).

\bibitem{Toner_etal_AnnPhys05} J. Toner, Y. Tu and S. Ramaswamy. 
\newblock Hydrodynamics and phases of flocks.
\newblock {\em Annals of Physics}, 318:170-244, (2005). 


\bibitem{Vicsek_etal1995}
T.~Vicsek, A.~Czirok, E.~Ben Jacob, I.~Cohen and O.~Shochet.
\newblock {Novel type of phase-transition in a system of self-driven particles}.
\newblock {\em Phys. Rev.  Lett.}, 75(6):1226-1229, (1995).

\bibitem{Vicsek_Zafeiris_PhysRep12} 
T. Vicsek and A. Zafeiris. 
\newblock Collective motion.
\newblock {\em Phys. Rep.}, 517:71-140, (2012).

\end{thebibliography}

\def\cprime{$'$}

\end{document}